\def\N{\mathcal{N}}
\def\M{\mathfrak{M}}
\newcommand\axlabel[2][\relax]{
\ifx\relax#1\tag{#2} \else \tag{#1}\fi
\label{#2}}\newcommand\axref[1]{(\ref{#1})}
\def\inte{B1}
\def\intz{B2}
\def\intv{B3}
\def\ints{B4}
\def\INTe{\axref{\inte}}
\def\INTz{\axref{\intz}}
\def\INTv{\axref{\intv}}
\def\INTs{\axref{\ints}}
\newcommand{\sys}[1]{\ensuremath{\mathbf{#1}}}
\newcommand{\verberg}[1]{}
\newcommand{\old}[1]{}
\newenvironment{proof}{\noindent \emph{Proof.}}
 { \rule{0.5em}{0.5em}}
 { \rule{0.5em}{0.5em}}
  \newcounter{memory}
   \newcounter{sectmemory}
\newenvironment{rtheorem}[1]{
\setcounter{memory}{\value{thm}}   
\setcounter{sectmemory}{\value{section}}   
\StrBehind{\getrefnumber{#1}}{.}[\collectnumber]   
\setcounter{thm}{0\collectnumber}                          
\StrBefore{\getrefnumber{#1}}{.}[\collectsectionnumber]   
\setcounter{section}{0\collectsectionnumber}
\addtocounter{thm}{-1}                                             
\begin{theorem}}
{\end{theorem}
\setcounter{section}{\value{sectmemory}} 
\setcounter{thm}{\value{memory}
}}
\newtheorem{thm}{Theorem}
\newtheorem{lemma}[thm]{Lemma}
\newtheorem{theorem}{Theorem}
\newtheorem{definition}{Definition}
\newtheorem{corollary}{Corollary}
\newenvironment{rlemma}[1]{
\setcounter{memory}{\value{thm}}   
\setcounter{sectmemory}{\value{section}}   
\StrBehind{\getrefnumber{#1}}{.}[\collectnumber]   
\setcounter{thm}{0\collectnumber}                          
\StrBefore{\getrefnumber{#1}}{.}[\collectsectionnumber]   
\setcounter{section}{0\collectsectionnumber}
\addtocounter{thm}{-1}                                             
\begin{lemma}}
{\end{lemma}
\setcounter{section}{\value{sectmemory}} 
\setcounter{thm}{\value{memory}
}}
\begin{document}
  \title{Pointwise intersection in neighbourhood modal logic}
  \date{}
  \author{Frederik Van De Putte\thanks{Post-doctoral fellow of the Flemish Research Foundation -- FWO Vlaanderen. We are indebted to Olivier Roy and Eric Pacuit for comments on preparatory notes for this paper.} ~and Dominik Klein\thanks{The work of DK  was partially supported by the Deutsche
Forschungsgemeinschaft (DFG) and  Agence Nationale de la Recherche (ANR) as part of the joint project Collective Attitude Formation [RO 4548/8-1] and by DFG and
Grantov\'a Agentura \v{C}esk\'e Republiky (GA\v{C}R) as part of the joint project From Shared Evidence to Group Attitudes [RO 4548/6-1].}}

\maketitle

  \begin{abstract}
 We study the logic of neighbourhood models with \emph{pointwise intersection}, as a means to characterize multi-modal logics. Pointwise intersection takes us from a set of neighbourhood sets $\mathcal{N}_i$ (one for each member $i$ of a set $G$, used to interpret the modality $\square_i$) to a new neighbourhood set $\mathcal{N}_G$, which in turn allows us to interpret the operator $\square_G$. Here, $X$ is in the neighbourhood for $G$ if and only if $X$ equals the intersection of some $\mathcal{Y} = \{Y_i \mid i\in G\}$. We show that the notion of pointwise intersection has various applications in epistemic and doxastic logic, deontic logic, coalition logic, and evidence logic. We then establish sound and strongly complete axiomatizations for the weakest logic characterized by pointwise intersection and for a number of variants, using a new and generally applicable technique for canonical model construction.  \vspace{14pt}

\noindent \textbf{Keywords:}  modal logic, neighbourhood semantics, group operators, distributed belief
 
\noindent \end{abstract}

\section{Introduction}

Neighbourhood semantics is a well-established tool to study generalizations and variants of Kripke-semantics for modal logic.\footnote{Scott \cite{Scott1970} and Montague \cite{Montague:1970} are often seen as the inventors of neighbourhood models; Chellas \cite{Chellas:ml} and Segerberg \cite{Segerberg:1971} are usually cited as the main figures in their development.} They have been successfully applied to i.a.\ the logic of ability \cite{Brown:logicab,Pauly:2002}, the dynamics of evidence and beliefs \cite{vanBenthem&Pacuit:2011}, conflict-tolerant deontic logic \cite{LG:ldd}, and the analysis of (descriptive or normative) conditionals \cite{DL:c,C&J2002}.

Formally, a neighbourhood function $\mathcal{N}:W\rightarrow\wp(\wp(W))$ yields a set of accessible sets $X_1,X_2,\ldots$ of worlds for every given world $w$ in a possible worlds model. $\square\varphi$ is then true iff there is some such $X$ in the neighbourhood set $\mathcal{N}(w)$, that coincides with the truth set of $\varphi$ (cf.\ Definitions \ref{def:model} and \ref{def:semclauses} below).

The move from Kripke semantics to neighbourhood semantics allows us to invalidate certain schemata that are problematic for a given interpretation of the modal operator $\square$, but also to include other schemata that would trivialize any normal modal logic.\footnote{See Table \ref{table:cheapcond} in Section \ref{sec:extcheap} for examples.} Apart from that, neighbourhood models can also be used as a purely technical vehicle in order to arrive at completeness or incompleteness w.r.t.\ less abstract possible worlds semantics.\footnote{One prototypical example of a completeness proof via neighbourhood semantics is \cite{DL:c}. In \cite{Governatori:2005}, neighbourhood semantics are used to prove the incompleteness of Elgesem's modal logic of agency \cite{Elgesem:1997}. We refer to \cite{Pacuit:2017} for a critical introduction to the many forms, uses and advantages of neighbourhood semantics.}

Many applications in philosophy and AI require a multitude of modal operators $\square_1,\square_2,\ldots$, where the indices may represent agents (logic of agency, doxastic or epistemic logic), non-logical axioms or reasons (logic of provability or normative reasoning), or sources of a norm (deontic logic) or of evidence (doxastic logic once more). Just as for Kripke-semantics, the step from the setting with only one modal operator to a multi-indexed one is easily made, as long as no interaction among the various operators, resp.\ neighbourhood functions is presupposed. However, the logic of neighbourhood models where certain neighbourhood functions are obtained by operations on (one or several) other neighbourhood functions is still largely unknown. This stands in sharp contrast to the current situation in Kripke-semantics, cf.\ the abundant literature on Dynamic Logic \cite{Hareletal:2000} and on Boolean Modal Logic \cite{GargovPassy:booleanmodallogic,Gargov-booleanspec}.

The current paper is a first step towards filling this gap. In particular, we study logics that are interpreted in terms of the \emph{pointwise intersection} of neighbourhoods. This concept is defined as follows, for a fixed (finite or infinite) index set $I = \{1,2,\ldots\}$ and a fixed set of atomic propositions $\mathfrak{P}$. 

\begin{definition}\label{def:model} A \emph{model}  $\mathfrak{M}$ is a triple $\langle W,\langle \mathcal{N}_i\rangle_{i\in I}, V\rangle$, where $W\neq\emptyset$ is the \emph{domain} of $\mathfrak{M}$, for every $i\in I$, $\mathcal{N}_i: W\rightarrow \wp(\wp(W))$ is a \emph{neighbourhood function for $i$}, and $V: \mathfrak{P}\rightarrow \wp(W)$ is a \emph{valuation function}.

Where  $\mathfrak{M}= \langle W,\langle \mathcal{N}_i\rangle_{i\in I}, V\rangle$ is a model and $G = \{i_1,\ldots,i_n\}\subseteq I$, the \emph{neighbourhood function for $G$} is given by

$$\mathcal{N}_G(w) = \{X_{i_1}\cap \ldots\cap X_{i_n} \mid  \mbox{ each } X_{i_j} \in \mathcal{N}_{i_j}(w)\}$$

\end{definition}

So, in the context of neighbourhood semantics, pointwise intersection takes as input any intersection of neighbourhoods, one for each agent $i\in G$, to form the new neighbourhood set for $G$. This new neighbourhood set is then used to interpret expressions of the type $\square_G\varphi$, by means of the standard semantic clause, plugging in the neighbourhood function $\mathcal{N}_G$.\footnote{One obvious question, especially if we do not assume that the neighbourhood sets $\mathcal{N}_i(w)$ are closed under intersection, is whether we can also have pointwise intersection of a neighbourhood set with itself. The short answer is: yes, we can, but this takes us beyond the scope of this conference paper. We return to this point in our concluding section.}

Beside its mathematical interest, pointwise intersection has many potential applications. In Section \ref{sec:app}, we briefly point out a few of these. Sections \ref{sec:bl}--\ref{sec:comp} form the technical core of the paper, providing (strong) soundness and completeness results for a number of logics interpreted in terms of models with pointwise intersection. We conclude with a summary and some open questions for future work.

\section{Applications}\label{sec:app}

What follows is a non-exhaustive list of (potential) applications of logics with pointwise intersection. We leave the full elaboration of these ideas for later occasions, and whenever possible, provide pointers to the literature for more background information. 

\paragraph{Epistemic and Doxastic Logic} The \emph{distributed knowledge} of a group of agents $G$ can be conceived as the knowledge that would be obtained if \old{all members of $G$ would put together what they each know individually, and close the result under logical consequence.}{some third agent combined the individual knowledge of all group members $G$ and closed the result under logical consequence \cite{AagotnesDistBel}.} 
The logic of this notion is then defined as an extension of a multi-agent version of \sys{S5}, where each operator $\square_G$ ($G\subseteq I$) is interpreted in terms of the intersection of the equivalence relations $R_i$ ($i\in G$).\footnote{The logic of distributed knowledge is investigated in the seminal work \cite{faginetal:RAK}. A small warning is in place here though. As Gerbrandy \cite{Gerbrandky:DK} shows, the notion of distributed knowledge has both a syntactic and a semantic reading, which are not entirely equivalent. Fagin and co-authors \cite{faginetal:RAK}, and most others in the field focus on the semantically driven view.
} Analogously, one can study \emph{distributed beliefs} of a group $G$ as the result of aggregating (or pooling) all the beliefs of the members of $G$. Formally, distributed belief can be seen as all combinations of pieces of belief, one for each agent. When beliefs are conceived as neighbourhoods, the operation of pooling one's beliefs corresponds to a pointwise intersection.

In his \cite{StalnakerKB} Robert Stalnaker has proposed a combined epistemic-doxastic logic that interprets belief as the mental component of knowledge. In the framework, he abandons the assumption that knowledge is negatively introspective. Also positive introspection has been heavily critisized on philosophical grounds. Correspondingly, \cite{KRG} propose two logics that weaken Stalnaker's framework further by also omitting positive introspection. It turns out that this renders belief a non-normal modality:  belief is closed under weakening but not under intersection, i.e.\ the agent can believe $\varphi$ and $\psi$ without believing $\varphi\wedge\psi$. 
This is but one example of a non-normal logic for knowledge and belief. {All such logics raise the question of defining group attitudes for non-normal modal logics akin to the distributed knowledge and belief defined above.  } 
 Our results show that group versions of non-normal knowledge and belief can be easily axiomatized.
To use a slogan: we can throw away the normal modal logic bathwater, while keeping the distributed knowledge/belief baby.

\paragraph{Evidence Logic} The framework of Evidence Logic was proposed in \cite{vanBenthem&Pacuit:2011} to study the way beliefs (of a given agent) are grounded in (possibly conflicting) evidence. Technically, evidence logics are obtained  by adding a monotonic operator\footnote{A modal operator $\square$ is \emph{monotonic} in a given system iff it satisfies the rule: from $\varphi\vdash \psi$, to infer $\square\varphi\vdash \square\psi$.} ${\sf E}$ for ``the agent has evidence for ...'' and a belief operator ${\sf B}$ of the type $\mathbf{KD45}$ to classical logic. ${\sf E}$ is characterized semantically in terms of a neighbourhood function $\mathcal{N}$, where $X\in \mathcal{N}(w)$ expresses that at $w$, the agent has evidence for $X$. The belief state at a world $w$ is interpreted as the union of all intersections $\bigcap\mathcal{X}$, where $\mathcal{X}$ is a maximal set of evidence such that $\bigcap\mathcal{X}\neq\emptyset$.

Going multi-agent with this framework is fairly straightforward. Here, our results can e.g.\ be used to study the piecemeal aggregation of evidence from various different sources, and how diverging strategies to do so impact the resulting belief set. Formally, $X\in\mathcal{N}_{\{i,j,k\}}(w)$ indicates that $X$ is a result of aggregating pieces of evidence of the sources $i$, $j$, and $k$. One interesting epistemological question -- that can now be studied at a logical level -- is whether it makes a difference if one first aggregates the evidence among the sources, before computing a set of beliefs, rather than using the evidence in its original form (ignoring the sources) to ground the beliefs.


\paragraph{Deontic Logic} Neighbourhood semantics have been used in Deontic Logic to model (non-explosive) conflict-tolerant normative reasoning \cite{LG:ldd,Goblehandbook}. Here, $\square_i \varphi$ can e.g.\ be used to express that there is at least one norm in the normative system $S_i$ that makes $\varphi$ obligatory; the presence of two conflicting norms in $S_i$ can then account for the truth of a deontic conflict of the type $\square_i \varphi\wedge\square_i \neg\varphi$. In this context, pointwise intersection can be interpreted as the piecemeal aggregation of norms from different normative systems; a formula such as $\square_{\{1,2\}} p$ then expresses that there are two norms, one in $S_1$, the other in $S_2$, such that obeying both norms entails that $p$ is the case.


An altogether different application of the formal framework developed here consists in reading the indices as \emph{reasons} for one's obligations. On this view, $\square_r \varphi$ expresseses that $r$ is a reason for $\varphi$ to be obligatory, and one can then aggregate reasons alongside with obligations: $\square_r \varphi \wedge \square_{r'}\psi$ yields $\square_{\{r,r'\}}(\varphi\wedge\psi)$. As argued in \cite{NairHorty:fc,OUR}, reasons play an important, but often neglected role in our normative reasoning; a thorough logical investigation of their interaction and aggregation in deontic logic is still largely lacking.

\paragraph{Coalition Logic, group abilities} As shown in \cite{Broersenetal:nsCL}, Pauly's Coalition Logic \cite{Pauly:2002} corresponds to the ability-fragment of \textbf{STIT} logic \cite{Belnapetal:ff,Horty:adl}. Moreover, this fragment is known to be decidable, in contrast to full \sys{STIT} logic for groups \cite{HerzigSchwartzentruber:prop}. In Coalition Logic, $\square_{G}\varphi$ expresses that ``the group of agents $G$ has the ability to ensure that $\varphi$ is the case'', or in more game-theoretic terminology, ``$G$ is $\alpha$-effective for $\varphi$''. The modality $\square_G$ is monotonic, meaning that we can only express what one of the group's choices \emph{necessitates} -- not what defines that choice.\footnote{Technically, $\square_G$ is monotonic iff it satisfies the rule (RM): if $\square\varphi$ and $\varphi\vdash\psi$, then $\square\psi$. We discuss this rule in Section \ref{subsec:closuresubsets}.} With the results of the current paper, we can now also obtain sound and complete logics for \emph{exact} ability (resp.\ effectivity), where $\square_G \varphi$ means that ``$G$ can make a choice that is defined by $\varphi$'', or in more mundane terms: ``$G$ can do exactly $\varphi$''.


\section{The Base Logic}\label{sec:bl}

In the remainder we use $\mathfrak{M}$, $\mathfrak{M}'$ to refer to arbitrary models as given by Definition \ref{def:model}. $X,Y,\ldots$ are used to refer to sets of worlds in a model, and $w,w',\ldots$ for single worlds. We write $G\subseteq_f I$ to denote that $G$ is a finite subset of $I$.

Let $\mathfrak{L}$ be the language obtained by closing a countable set of propositional variables $\mathfrak{P} = \{p,q,\ldots\}$ and the logical constants $\bot,\top$ under the classical connectives and all unary modal operators of the type $\square_G$, where $G\subseteq_f I$. We use $\varphi,\psi,\ldots$ as metavariables for formulas and $\Gamma,\Delta,\ldots$ as metavariables for sets of formulas. To interpret $\mathfrak{L}$, we use the models given by Definition \ref{def:model} together with the following (standard) semantic clauses:\footnote{We treat $\bot,\neg,\vee$ as primitive; the other connectives and $\top$ are defined in the standard way.}

\begin{definition}\label{def:semclauses} Where  $\mathfrak{M}= \langle W,\langle \mathcal{N}_i\rangle_{i\in I}, V\rangle$ is a model, $w\in W$, $\varphi,\psi\in\mathfrak{L}$, and $G\subseteq_f I$:
\begin{itemize}
\item[0.] $\mathfrak{M},w\not\models\bot$
\item[1.] $\mathfrak{M},w \models \varphi$ iff $w\in V(\varphi)$ for all $\varphi\in\mathfrak{P}$
\item[2.] $\mathfrak{M},w\models \neg\varphi$ iff $\mathfrak{M},w\not\models \varphi$
\item[3.] $\mathfrak{M},w\models \varphi\vee\psi$ iff $\mathfrak{M},w\models \varphi$ or $\mathfrak{M},w\models \psi$
\item[4.] $\mathfrak{M},w\models \square_G\varphi$ iff $\|\varphi\|^{\mathfrak{M}} \in \mathcal{N}_G(w)$
\end{itemize}
\noindent where $\|\varphi\|^{\mathfrak{M}} = \{w\in W\mid \mathfrak{M},w\models \varphi\}$.
\end{definition}

Validity($\Vdash \varphi$) and semantic consequence ($\Gamma\Vdash \varphi$), for a given class of models, are defined in the standard way, viz.\ as truth, resp.\ truth-preservation at all worlds in all models in that class. 

In the remainder of this paper, we will consider various logics that are obtained by imposing certain frame conditions on the models defined above. We start with the \emph{base logic}, i.e.\ the logic characterized by the class of all models. To characterize this logic syntactically, we will need the following axioms in addition to classical logic (henceforth, \sys{CL}):

\begin{align*}
&\mbox{where } G\cap H = \emptyset: (\square_G\varphi\wedge\square_H\psi) \rightarrow \square_{G\cup H} (\varphi\wedge\psi) \axlabel{\inte}\\[.3em]
&\square_{G\cup H}\top \rightarrow \square_G \top \axlabel{\intz}\\[.3em]
&(\square_G \varphi \wedge \square_{G\cup H\cup J}\varphi) \rightarrow \square_{G\cup H} \varphi \axlabel{\intv}\\[.3em]
&(\square_G\varphi\wedge \square_H (\varphi \vee\psi)) \rightarrow \square_{G\cup H}\varphi \axlabel{\ints}
\end{align*}

\verberg{

\begin{itemize}
\item[] where $G\cap H = \emptyset$: $(\square_G\varphi\wedge\square_H\psi) \rightarrow \square_{G\cup H} (\varphi\wedge\psi)$ \hfill{\INTe}
\item[] $\square_{G\cup H}\top \rightarrow \square_G \top$ \hfill{\INTz}
\item[] $(\square_G \varphi \wedge \square_{G\cup H\cup J}\varphi) \rightarrow \square_{G\cup H} \varphi$ \hfill{\INTv}
\item[] $(\square_G\varphi\wedge \square_H (\varphi \vee\psi)) \rightarrow \square_{G\cup H}\varphi$ \hfill{\INTs}
\end{itemize}
}

\noindent and, as usual, replacement of equivalents and modus ponens:

\verberg{
\begin{itemize}
\item[] if $\varphi\vdash\psi$ and $\psi\vdash\varphi$, then $\square_G \varphi\vdash\square_G\psi$ \hfill{(RE)}
\item[] if $\vdash\varphi$ and $\vdash\varphi\rightarrow\psi$, then $\vdash\psi$ \hfill{(MP)}
\end{itemize}
}
\begin{align*}
&\text{if }\varphi\vdash\psi\text{ and }\psi\vdash\varphi\text{, then }\square_G \varphi\vdash\square_G\psi \axlabel{RE}\\[.3em]
&\text{if }\vdash\varphi\text{ and }\vdash\varphi\rightarrow\psi\text{, then }\vdash\psi \axlabel{MP}
\end{align*}

Let us quickly offer some interpretations of these axioms. \INTe\, is an obvious syntactic consequence of taking intersections: If $\|\varphi\|^{\mathfrak{M}}$ is in $G$'s neighbourhood and $\|\psi\|^{\mathfrak{M}}$ is in $H$'s neighbourhood, then $\|\varphi\wedge\psi\|^{\mathfrak{M}}$ is in their intersection neighbourhood whenever $G$ and $H$ are disjoint. Note that the latter restriction is required; without it, the axiom is not sound for the base logic.\footnote{To see why, note that neighbourhood functions are not generally assumed to be closed under intersection: $X,Y\in\mathcal{N}_i(w)$ does not imply $X\cap Y\in\mathcal{N}(w)$. The unrestricted version of \INTe\, includes the case where $G=H=\{i\}$, which is only sound if neighbourhoods are closed under intersection. We return to this point in Section \ref{subsec:intersect}.} Axiom \INTz\, states that $W$ can  only be in $G$'s intersection neighbourhood if it is in the neighbourhood of each member of $G$.
\INTv\, expresses a property of convex closure: if $X\in \mathcal{N}_G(w)$ and $X\in\mathcal{N}_{G\cup H\cup J}(w)$, then for all $i\in H$, there must be a $Y_i\in \mathcal{N}_i(w)$ such that $X \subseteq Y_i$. Consequentially, also $X\in\mathcal{N}_{G\cup H}(w)$. \INTs\, follows the same reasoning as \INTv\, but is logically independent. {In the appendix we prove the following:}

\begin{lemma}\label{Indeplem}
Axioms \INTe-\INTs\ are logically independent from each other.
\end{lemma}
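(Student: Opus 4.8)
The plan is to establish independence semantically: for each of \INTe--\INTs\ I will produce a single structure that validates the other three axioms (together with \sys{CL} and the rules RE and MP) but refutes the chosen one. The first observation is that the models of Definition \ref{def:model} are useless for this, since \INTe--\INTs\ are all sound for the class of all such models; every genuine pointwise-intersection model validates all four at once. I would therefore pass to \emph{general neighbourhood frames}, i.e.\ families $\langle\mathcal{N}_G\rangle_{G\subseteq_f I}$ in which each $\mathcal{N}_G$ is stipulated freely rather than computed as a pointwise intersection, keeping the truth clause $\mathfrak{M},w\models\square_G\varphi$ iff $\|\varphi\|^{\mathfrak{M}}\in\mathcal{N}_G(w)$. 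On any such frame \sys{CL} is valid, and RE and MP preserve validity (RE because the clause only inspects truth sets, MP trivially). Hence the set $\Lambda$ of formulas valid on a fixed general frame contains \sys{CL} and is closed under RE and MP; so if $\Lambda$ contains three of the axioms but not the fourth, that fourth axiom is not derivable from the other three.

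Each axiom corresponds to a condition on $\langle\mathcal{N}_G\rangle$, and imposing the condition makes the axiom frame-valid while a single refuting valuation witnesses the failure of the excluded one. The conditions are: for \INTe, that $G\cap H=\emptyset$, $X\in\mathcal{N}_G(w)$ and $Y\in\mathcal{N}_H(w)$ imply $X\cap Y\in\mathcal{N}_{G\cup H}(w)$; for \INTz, that $W\in\mathcal{N}_{G\cup H}(w)$ implies $W\in\mathcal{N}_G(w)$; for \INTv, the convexity condition that $G\subseteq K\subseteq L$ and $X\in\mathcal{N}_G(w)\cap\mathcal{N}_L(w)$ imply $X\in\mathcal{N}_K(w)$; and for \INTs, that $X\in\mathcal{N}_G(w)$, $Z\in\mathcal{N}_H(w)$ and $X\subseteq Z$ imply $X\in\mathcal{N}_{G\cup H}(w)$. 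I would then build four one-world frames. To break \INTe, take $W=\{a,b,c\}$, $I=\{1,2\}$, $\mathcal{N}_{\{1\}}(w)=\{\{a,b\}\}$, $\mathcal{N}_{\{2\}}(w)=\{\{b,c\}\}$ and all other neighbourhoods empty, so that $\{a,b\}\cap\{b,c\}=\{b\}\notin\mathcal{N}_{\{1,2\}}(w)$. To break \INTz, set $\mathcal{N}_{\{1,2\}}(w)=\{W\}$ with every other neighbourhood empty. To break \INTv, take $I=\{1,2,3\}$, $W=\{a,b\}$, $\mathcal{N}_{\{1\}}(w)=\mathcal{N}_{\{1,2,3\}}(w)=\{\{a\}\}$ and the intermediate $\mathcal{N}_{\{1,2\}}(w)$ (and everything else) empty. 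To break \INTs, take $I=\{1,2,3\}$, $W=\{a,b,c\}$, $\mathcal{N}_{\{1,2\}}(w)=\{\{a\}\}$, $\mathcal{N}_{\{2,3\}}(w)=\{\{a,b\}\}$ and all else empty, so that $\{a\}\subseteq\{a,b\}$ while $\{a\}\notin\mathcal{N}_{\{1,2,3\}}(w)$.

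The main obstacle is exactly that these conditions interact, so each frame must be tuned to fail precisely one of them; two interactions dictate the design. First, \INTe\ together with RE already yields every \emph{disjoint} instance of \INTs\ (for disjoint $G,H$ one has $\varphi\wedge(\varphi\vee\psi)\equiv\varphi$, so $\square_G\varphi\wedge\square_H(\varphi\vee\psi)$ gives $\square_{G\cup H}\varphi$). Hence a frame separating \INTs\ must refute it at \emph{overlapping} index sets, which is why the \INTs-frame uses $\{1,2\}$ and $\{2,3\}$; conversely, \INTs\ forces the \INTe-conclusion whenever the intersection $X\cap Y$ itself lies in a component neighbourhood, which is why the \INTe-frame is chosen so that $X\cap Y=\{b\}$ belongs to no neighbourhood. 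Second, \INTz\ constrains only the status of $W$, so the \INTv- and \INTs-frames act only on proper subsets of $W$ and thereby leave \INTz\ satisfied. Once the four frames are fixed, verifying the three retained conditions and the single refutation in each is a finite and routine check, and together the four cases give the mutual independence of \INTe--\INTs.
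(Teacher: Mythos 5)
Your proposal is correct and follows essentially the same route as the paper: both pass to models in which each $\mathcal{N}_G$ is primitive (so that the pointwise-intersection semantics, under which all four axioms are sound, is abandoned), keep the semantic clause of Definition \ref{def:semclauses}, and exhibit four constant-neighbourhood countermodels, each validating three axioms under every valuation while a suitable valuation refutes the fourth. Your concrete frames differ only in inessential details from the paper's $\M_1$--$\M_4$ (e.g.\ empty default neighbourhoods instead of $\{\emptyset\}$, and overlapping groups $\{1,2\},\{2,3\}$ rather than $\{1,3\},\{1,2\}$ for \INTs), and the phrase ``one-world frames'' should read ``constant-neighbourhood frames''.
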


Before we move to the completeness proof, some terminological remarks are needed. In this and the next section, we use Hilbert-style axiomatizations, with \axref{MP} and \axref{RE} as our only rules. We work with axiom schemata; an axiom is any instance of an axiom schema in $\mathfrak{L}$. Every formula in $\mathfrak{L}$ that can be derived by the axioms and rules is a theorem of the logic. Finally, consequence relations are defined from the respective axiomatizations as follows: $\Gamma\vdash\varphi$ iff there are $\psi_1,\ldots,\psi_n\in\Gamma$ such that $(\psi_1\wedge \ldots\wedge\psi_n)\rightarrow\varphi$ is a theorem. Note that this means that the syntactic consequence relation of the defined logics is by definition compact.




\section{Strong Completeness for the Base Logic}

In this section, we prove the following:

\begin{theorem}[Strong Completeness for the Base Logic]\label{basethm}
A sound and strongly complete axiomatization of the base logic is obtained by adding \INTe, \INTz, \INTv, and \INTs\, to any sound and complete axiomatization of \sys{CL}, and closing the result under \axref{RE} and \axref{MP}.
\end{theorem}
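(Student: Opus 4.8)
The plan is to prove soundness by a direct inspection and completeness by a canonical-model construction, with all the difficulty concentrated in the definition of the atomic neighbourhood functions. For \textbf{soundness}, I would check each of \INTe--\INTs\ against Definition~\ref{def:semclauses}: \INTe\ is immediate from the fact that $\|\varphi\|\cap\|\psi\|=\|\varphi\wedge\psi\|$ and that, for \emph{disjoint} $G,H$, any witnessing tuple for $G$ together with one for $H$ is a witnessing tuple for $G\cup H$ (the disjointness being exactly what lets us concatenate the tuples, as the accompanying footnote stresses); \INTz\ uses $W=\|\top\|$ and drops coordinates from a witnessing tuple; \INTv\ and \INTs\ are the convex-closure observations already spelled out in the text. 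That \axref{RE} and \axref{MP} preserve validity is routine.

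For \textbf{completeness}, since the consequence relation is compact by definition and Lindenbaum's Lemma is available, strong completeness reduces to exhibiting one model $\mathfrak{M}^c$ with domain $W^c$ the set of maximal consistent sets (MCSs), valuation $V^c(p)=|p|:=\{\Lambda\mid p\in\Lambda\}$, and a Truth Lemma $\mathfrak{M}^c,\Lambda\models\varphi \iff \varphi\in\Lambda$: any consistent $\Gamma$ then extends to an MCS and is satisfied there. Every clause except the modal one is standard, so the entire task is to define the atomic $\mathcal{N}_i$ so that the \emph{induced} pointwise intersection $\mathcal{N}^c_G$ satisfies, for all $\psi$ and finite $G$, $\;|\psi|\in\mathcal{N}^c_G(\Lambda) \iff \square_G\psi\in\Lambda$. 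By the induction hypothesis $\|\psi\|^{\mathfrak{M}^c}=|\psi|$, so this equivalence \emph{is} the modal case of the Truth Lemma.

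Here the crucial point is that the base logic is non-monotonic (we have \axref{RE} but not RM), so on proof sets the singleton neighbourhoods are rigidly pinned: $|\varphi|\in\mathcal{N}_i(\Lambda)\iff\square_{\{i\}}\varphi\in\Lambda$, with no freedom at all. The only freedom lies in which \emph{non}-proof-sets to place in $\mathcal{N}_i(\Lambda)$, and the construction must exploit exactly this, since a proof set $|\psi|$ with $\square_G\psi\in\Lambda$ must be realized as $\bigcap_{i\in G}X_i$ with $X_i\in\mathcal{N}_i(\Lambda)$ that are in general not proof sets (intersections of singleton-forced proof sets are far too sparse to witness an arbitrary $\square_G\psi$). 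Concretely, for each $\square_G\psi\in\Lambda$ I would adjoin, for $i\in G$, witness sets of the form $|\psi|\cup S_i$ with $S_i\subseteq|\neg\psi|$ chosen so that $\bigcap_{i\in G}(|\psi|\cup S_i)=|\psi|$; the degenerate case $\psi\equiv\top$ (where $|\neg\psi|=\emptyset$ and separation inside $W^c$ is impossible) is harmless because \INTz\ already forces $\square_{\{i\}}\top\in\Lambda$, and any remaining unsplittable case is handled by augmenting $W^c$ with auxiliary separating points and re-verifying that proof sets over the enlarged domain still track MCS-membership.

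Two obligations then remain. \emph{Backward} (witnesses exist): for each $\square_G\psi\in\Lambda$ the adjoined $X_i$ give $\bigcap_{i\in G}X_i=|\psi|$, so $|\psi|\in\mathcal{N}^c_G(\Lambda)$; this holds essentially by design. \emph{Forward} (no spurious sets): whenever \emph{any} selection $X_i\in\mathcal{N}_i(\Lambda)$ ($i\in G$) happens to satisfy $\bigcap_{i\in G}X_i=|\psi|$ for some formula $\psi$, one must be able to derive $\square_G\psi\in\Lambda$. When the $X_i$ are all singleton-forced proof sets this follows by iterating \INTe\ across the pairwise-disjoint singletons and applying \axref{RE}; the genuinely delicate combinations -- witnesses for different formulas and different groups colliding, and the nested-group patterns $G\subseteq G'\subseteq G\cup H$ -- are precisely what \INTv, \INTs, and \INTz\ are meant to govern. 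I expect this forward direction to be the main obstacle: it amounts to showing that \INTe--\INTs\ are jointly \emph{sufficient} to account for every way a pointwise intersection of the manufactured witnesses can collapse onto a proof set, i.e.\ that these four axioms capture exactly the combinatorics of pointwise intersection and that no further closure principle is silently needed. Once both obligations are in place, the Truth Lemma follows by the usual induction and strong completeness is immediate.
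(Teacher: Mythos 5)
Your overall architecture (canonical model over maximal consistent sets, witnesses of the form $|\psi|\cup S_i$ whose pointwise intersection collapses back to $|\psi|$, and the diagnosis that the ``no spurious intersections'' direction is where the difficulty sits) matches the paper's strategy, and your soundness sketch is fine. But there is a genuine gap at exactly the point you yourself flag as the main obstacle: you never construct the padding sets $S_i$, and the choice you do commit to --- $S_i\subseteq|\neg\psi|$ inside a domain whose worlds are plain MCSs --- is not shown to be possible, and there is no reason to think it is. The constraints on the padding are global, not per-formula: for \emph{every} pair $(G,\psi)$ with $\square_G\psi\in\Lambda$ you need a family $\{S_i^{G,\psi}\}_{i\in G}$ with empty intersection, while simultaneously \emph{every mixed selection} of witnesses across distinct pairs $(G,\psi),(G',\psi'),\ldots$ must be prevented from intersecting to a proof set not licensed by $\Lambda$; indeed already each single set $|\psi|\cup S_i^{G,\psi}$ must fail to be a proof set, or else some $\square_{\{i\}}\chi$ is spuriously forced. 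Meeting infinitely many such independence constraints inside the fixed sets $|\neg\psi|$ is precisely the problem, and your fallback (``augmenting $W^c$ with auxiliary separating points'') gestures at the solution without providing it. The paper's solution --- its central new idea --- is to blow up the domain: worlds are pairs $(\Lambda,f)$ with $f\in\mathbb{F}$, $f(G,\varphi)\in G$, and the padding for $(G,\varphi,i)$ is $\{(\Lambda',f')\mid f'(G,\varphi)\neq i\}$, i.e.\ a cut along the $(G,\varphi)$-coordinate of the index (Definition \ref{def:canmodbase}). Because distinct pairs use independent coordinates of $f$, one gets Lemma \ref{lem:aux}: any selection omitting, for each $(G,\varphi)$, at least one member of the family $\{X^{G,\varphi}_i\mid i\in G\}$ has an intersection containing an entire slice $\{(\Lambda',f_0)\mid \Lambda' \mbox{ an MCS}\}$, hence is too large to collapse onto an unlicensed proof set; and Lemma \ref{lem:aux2}: complete families intersect to exactly $\{(\Lambda',f')\mid\varphi\in\Lambda'\}$. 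Nothing in your proposal plays the role of these two lemmas.

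Second, even granting a workable construction, the left-to-right direction of the truth lemma is not merely ``governed by'' \INTv, \INTs, \INTz; it requires an explicit derivation that your proposal omits. In the paper one splits the selection into the pairs $(H,\psi)$ whose witness family is complete (the set $\mathcal{B}$) and the rest; Lemma \ref{lem:aux} neutralizes the rest, yielding $\vdash\bigwedge_{(H,\psi)\in\mathcal{B}}\psi\leftrightarrow\varphi$ and hence, via \INTe\ and \axref{RE}, $\square_K\varphi\in\Lambda$ for $K=\bigcup_{(H,\psi)\in\mathcal{B}}H$; then for each $i\in G$ one shows $\vdash\psi_i\leftrightarrow(\varphi\vee\psi_i)$ for its witness formula $\psi_i$, applies \axref{RE} and one instance of \INTs\ per member of $G$ to climb to $\square_{K\cup\bigcup_{i\in G}H_i}\varphi$, and finally contracts with \INTv\ (using $K\subseteq G\subseteq K\cup\bigcup_{i\in G}H_i$) to conclude $\square_G\varphi\in\Lambda$. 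This derivation chain, together with the copying construction that makes its hypotheses available, is the actual content of the completeness proof; your proposal is a correct plan with both of its hardest steps left open.
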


The proof of soundness is a matter of routine; it suffices to check that all the axioms are sound with respect to the class of all neighbourhood models. 
For the completeness proof, we need to construct a canonical model $\mathfrak{M}^c$, in which every world corresponds to a maximal consistent set (MCS) of formulas $\Lambda\subseteq\mathfrak{L}$.  The main difficulty here is to construct the $\mathcal{N}_i$ in such a way that (a) if a given formula $\square_G\varphi$ has to be true at a world $w$, then the pointwise intersection of the neighbourhoods $\mathcal{N}_i(w)$ for $i\in G$ will contain $\|\varphi\|^\mathfrak{M}$, but also (b) if $\neg\square_G\varphi$  is to be true at world $w$, then no pointwise intersection of sets in   $\mathcal{N}_i(w)$ for $i\in G$ will generate $\|\varphi\|^\mathfrak{M}$, i.e. we don't create too many intersection sets. To arrive at (b), we will need to make copies of each MCS $\Lambda$.



Let $\mathbb{G} = \{G \mid G\subseteq_f I\}$. Let $\mathbb{F}$ denote the set of all functions $f: \mathbb{G}\times\mathfrak{L}\rightarrow I$ such that, for all $G\in\mathbb{G}$ and all $\varphi\in\mathfrak{L}$, $f(G,\varphi) \in G$. The members of $\mathbb{F}$ are used as indices for the copies of the MCS in our canonical model:

\begin{definition}\label{def:canmodbase}
The \emph{canonical model for the base logic} is $\mathfrak{M}^c = \langle W^c, \langle \mathcal{N}^{c}_i\rangle_{i\in I}, V^c\rangle$, where

\begin{itemize}
\item[1.] $W^c = \{(\Lambda, f)\mid \Lambda \mbox{ is a MCS in } \mathfrak{L} \mbox{ and }  f\in \mathbb{F} \}$;
\item[2.] For all $\varphi\in \mathfrak{P}$, $V^c(\varphi) = \{(\Lambda,f)\in W^c\mid \varphi\in\Lambda\}$
\item[3.] for all $i\in I$, $\mathcal{N}^{c}_i(\Lambda,f) = \{X^{G,\varphi}_{i} \mid \square_G\varphi\in \Lambda, i\in G\subseteq_f I\}$ where,
\item[4.] for all $(G,\varphi)\in\mathbb{G}\times\mathfrak{L}$ and $i\in G$,

$$X^{G,\varphi}_{i} = \{(\Lambda,f) \in W^c \mid \varphi\in\Lambda \mbox{ or } f(G,\varphi)\neq i \}$$


\end{itemize}
\end{definition}

It is not hard to check that $\mathfrak{M}^c$ is well-defined; it suffices to show that $W^c$ is non-empty, which holds in view of the soundness of the base logic, and by a standard Lindenbaum construction.

The real difficulty consists in proving the truth lemma (Lemma \ref{lem:truth} below). To get there, we first prove two auxiliary lemmata:

\begin{lemma}\label{lem:aux}
Let $\mathcal{Y}$ be a set of sets $X^{G,\psi}_{i}$ with $i\in G$ and $(G,\psi)\in \mathbb{G}\times \mathfrak{L}$, such that for no $(G,\varphi)$, $\{X^{G,\varphi}_{i} \mid i\in G\}\subseteq \mathcal{Y}$. Then there is an $f_0\in\mathbb{F}$ such that
\begin{align} \{(\Lambda,f_0)\in W^c\}\subseteq \bigcap \mathcal{Y}
\end{align}
\end{lemma}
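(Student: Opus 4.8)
The plan is to construct the required function $f_0$ directly, exploiting the definition of the sets $X^{G,\psi}_i$. The crucial observation is that a world $(\Lambda,f_0)$ belongs to $X^{G,\psi}_i$ exactly when $\psi\in\Lambda$ or $f_0(G,\psi)\neq i$. Since we want $(\Lambda,f_0)\in\bigcap\mathcal{Y}$ to hold for \emph{every} MCS $\Lambda$ — including those with $\psi\notin\Lambda$ — we cannot rely on the first disjunct; instead we must arrange, for each $X^{G,\psi}_i\in\mathcal{Y}$, that $f_0(G,\psi)\neq i$. This is where the hypothesis on $\mathcal{Y}$ is used.

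First I would define $f_0$ pointwise on $\mathbb{G}\times\mathfrak{L}$. Fix a pair $(G,\psi)$. If $\mathcal{Y}$ contains at least one set of the form $X^{G,\psi}_i$, then the hypothesis that $\{X^{G,\psi}_i\mid i\in G\}\not\subseteq\mathcal{Y}$ guarantees some $j\in G$ with $X^{G,\psi}_j\notin\mathcal{Y}$; set $f_0(G,\psi)=j$. For every other pair — where no copy of type $(G,\psi)$ lies in $\mathcal{Y}$ — let $f_0(G,\psi)$ be an arbitrary element of $G$. In both cases $f_0(G,\psi)\in G$, so $f_0\in\mathbb{F}$ as required.

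It then remains to verify the inclusion. Take any $(\Lambda,f_0)\in W^c$ and any $X^{G,\psi}_i\in\mathcal{Y}$. By construction $f_0(G,\psi)=j$ for the chosen missing index $j$; since $X^{G,\psi}_j\notin\mathcal{Y}$ while $X^{G,\psi}_i\in\mathcal{Y}$, we have $j\neq i$, so the second disjunct in the membership condition for $X^{G,\psi}_i$ is satisfied, whence $(\Lambda,f_0)\in X^{G,\psi}_i$. As this holds for each element of $\mathcal{Y}$, we conclude $(\Lambda,f_0)\in\bigcap\mathcal{Y}$.

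I do not expect a serious obstacle here: once the definitions are unwound, the statement is essentially a pointwise choice argument. The only point requiring care is that a single $f_0$ must work uniformly for all MCS $\Lambda$ at once; this is precisely why the construction leans on the index coordinate $f_0(G,\psi)\neq i$ rather than on formula-membership, and why the hypothesis — that for no $(G,\varphi)$ are all copies $\{X^{G,\varphi}_i\mid i\in G\}$ present in $\mathcal{Y}$ — is exactly the leeway needed to select a missing index for each relevant pair.
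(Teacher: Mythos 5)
Your proof is correct and follows essentially the same route as the paper's: both construct the function by assigning, to each pair $(G,\psi)$ represented in $\mathcal{Y}$, an index $j\in G$ whose copy $X^{G,\psi}_j$ is missing from $\mathcal{Y}$ (guaranteed by the hypothesis), so that membership in every element of $\mathcal{Y}$ holds via the disjunct $f_0(G,\psi)\neq i$, uniformly in $\Lambda$. Your explicit remark that $j\neq i$ follows from $X^{G,\psi}_j\notin\mathcal{Y}$ while $X^{G,\psi}_i\in\mathcal{Y}$ is a slightly more careful rendering of a step the paper leaves implicit.
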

\begin{proof} Suppose the antecedent holds. Let $f\in\mathbb{F}$ be such that, for every $X^{G,\psi}_{i} \in\mathcal{Y}$, $f(G,\psi) = i^{G,\psi}$ for some $i^{G,\psi}\in G$ such that $X^{G,\psi}_{i^{G,\psi}} \not\in\mathcal{Y}$. In view of the supposition, there is at least one such $f$. Note that, for all $X^{G,\psi}_{i} \in\mathcal{Y}$, $f(G,\psi) \neq i$. By Definition \ref{def:canmodbase}, for all $X^{G,\psi}_{i} \in\mathcal{Y}$ and all MCS $\Lambda$, $(\Lambda,f)\in X^{G,\psi}_{i}$. Consequently, for all MCS $\Lambda$, $(\Lambda,f)\in \bigcap \mathcal{Y}$.
\end{proof}

\begin{lemma}\label{lem:aux2}
If $\mathcal{Y} = \{X^{G,\varphi}_{i} \mid i\in G\}$, then $\bigcap \mathcal{Y} = \{(\Lambda,f)\in W^c\mid \varphi\in \Lambda\}$.
\end{lemma}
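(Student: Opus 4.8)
The plan is to prove the set equality by a direct chase of membership through the definition of $X^{G,\varphi}_i$, treating the two inclusions together via an ``iff'' argument and splitting on whether $\varphi\in\Lambda$. First I would unpack the left-hand side: since $\mathcal{Y} = \{X^{G,\varphi}_{i} \mid i\in G\}$ keeps $G$ and $\varphi$ fixed while ranging over $i\in G$, a point $(\Lambda,f)\in W^c$ lies in $\bigcap\mathcal{Y}$ exactly when, for every $i\in G$, we have $(\Lambda,f)\in X^{G,\varphi}_i$, which by clause 4 of Definition \ref{def:canmodbase} means that for every $i\in G$, either $\varphi\in\Lambda$ or $f(G,\varphi)\neq i$.

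Next I would dispose of the easy direction. If $\varphi\in\Lambda$, then the first disjunct of the membership condition holds outright for every $i\in G$, so $(\Lambda,f)\in X^{G,\varphi}_i$ for all $i\in G$ and hence $(\Lambda,f)\in\bigcap\mathcal{Y}$. This shows $\{(\Lambda,f)\in W^c\mid \varphi\in\Lambda\}\subseteq\bigcap\mathcal{Y}$.

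For the converse, suppose $\varphi\notin\Lambda$; I want to conclude $(\Lambda,f)\notin\bigcap\mathcal{Y}$. Here the one genuinely load-bearing observation is the defining constraint on $\mathbb{F}$: every $f\in\mathbb{F}$ satisfies $f(G,\varphi)\in G$. Setting $i_0 = f(G,\varphi)$, we thus have $i_0\in G$, so $X^{G,\varphi}_{i_0}$ is one of the members of $\mathcal{Y}$. For this particular index the membership condition reads ``$\varphi\in\Lambda$ or $f(G,\varphi)\neq i_0$''; but $\varphi\notin\Lambda$ by assumption and $f(G,\varphi)=i_0$ by choice of $i_0$, so both disjuncts fail and $(\Lambda,f)\notin X^{G,\varphi}_{i_0}$. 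Therefore $(\Lambda,f)\notin\bigcap\mathcal{Y}$, which gives the reverse inclusion and completes the argument.

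I do not expect any real obstacle here: the statement is essentially a bookkeeping identity, and the whole content is the remark that, because $f(G,\varphi)$ is forced to be an element of $G$, the universally quantified clause ``$f(G,\varphi)\neq i$ for all $i\in G$'' can never be satisfied on its own and so the equation must be carried by the disjunct $\varphi\in\Lambda$. The only point deserving care is to make the fixed-versus-varying role of the parameters explicit, so that the reader sees that it is $i$ (and not $G$ or $\varphi$) that ranges over the index set in forming $\bigcap\mathcal{Y}$.
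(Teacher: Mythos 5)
Your proof is correct and takes essentially the same route as the paper's: unpack clause 4 of Definition \ref{def:canmodbase}, observe that the inclusion $\{(\Lambda,f)\mid\varphi\in\Lambda\}\subseteq\bigcap\mathcal{Y}$ is immediate, and reduce the converse to the defining constraint on $\mathbb{F}$. If anything, your identification of the load-bearing fact is the sharper one --- you use that $f(G,\varphi)\in G$ for \emph{every} $f\in\mathbb{F}$, so the disjunct ``$f(G,\varphi)\neq i$'' cannot hold for all $i\in G$ at once, whereas the paper phrases the appeal to $\mathbb{F}$ as the existence, for each $i\in G$, of some $f'$ with $f'(G,\varphi)=i$.
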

\begin{proof}
By Definition \ref{def:canmodbase}.4,

\begin{align} \bigcap_{i\in G} X^{G,\varphi}_{i} = \bigcap_{i\in G} \{(\Lambda,f) \in W^c \mid \varphi\in\Lambda \mbox{ or } f(G,\varphi)\neq i \} \end{align}

In view of the definition of $\mathbb{F}$, we know that for every $i\in G$, there is some $f'\in\mathbb{F}$ such that $f'(G,\varphi) = i$. Hence,

\begin{align}\bigcap_{i\in G} \{(\Lambda,f) \in W^c \mid \varphi\in\Lambda \mbox{ or } f(G,\varphi)\neq i \} = \{(\Lambda,f)\in W^c\mid \varphi\in\Lambda\}\end{align}
\end{proof}

\begin{lemma}[Truth Lemma]\label{lem:truth} For all $(\Lambda,f)\in W^c$ and all $\varphi\in\mathfrak{L}$: $\mathfrak{M}^c,(\Lambda,f)\models \varphi$ iff $\varphi\in\Lambda$.
\end{lemma}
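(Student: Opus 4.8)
The plan is to prove the Truth Lemma by induction on the complexity of $\varphi$. The Boolean cases ($\bot$, propositional variables, $\neg$, $\vee$) are entirely standard, relying on the fact that each $\Lambda$ is a maximal consistent set and that $V^c$ is defined so that $\varphi\in\Lambda$ iff $(\Lambda,f)\in V^c(\varphi)$ for atomic $\varphi$. The only interesting case is the modal one: $\varphi = \square_G\psi$. Here I must show that $\square_G\psi\in\Lambda$ iff $\|\psi\|^{\mathfrak{M}^c}\in\mathcal{N}^c_G(\Lambda,f)$, where by the inductive hypothesis $\|\psi\|^{\mathfrak{M}^c} = \{(\Lambda',f')\in W^c\mid \psi\in\Lambda'\}$.

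For the \emph{left-to-right} direction, suppose $\square_G\psi\in\Lambda$. Writing $G = \{i_1,\dots,i_n\}$, Definition \ref{def:canmodbase}.3 tells me that each $X^{G,\psi}_{i_j}\in\mathcal{N}^c_{i_j}(\Lambda,f)$. By Lemma \ref{lem:aux2}, $\bigcap_{j} X^{G,\psi}_{i_j} = \{(\Lambda',f')\in W^c\mid \psi\in\Lambda'\}$, which by the inductive hypothesis is exactly $\|\psi\|^{\mathfrak{M}^c}$. Since $\mathcal{N}^c_G(\Lambda,f)$ is the pointwise intersection of the $\mathcal{N}^c_{i_j}(\Lambda,f)$, this witnesses $\|\psi\|^{\mathfrak{M}^c}\in\mathcal{N}^c_G(\Lambda,f)$. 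This direction is the easy one and corresponds exactly to requirement (a) flagged before the lemma.

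The \emph{right-to-left} direction is the main obstacle, and it is where requirement (b) and the copies of the MCS come into play. Suppose $\|\psi\|^{\mathfrak{M}^c}\in\mathcal{N}^c_G(\Lambda,f)$. Then there is a family $\{Y_{i}\mid i\in G\}$ with each $Y_i\in\mathcal{N}^c_i(\Lambda,f)$ such that $\bigcap_{i\in G} Y_i = \|\psi\|^{\mathfrak{M}^c}$; by definition each $Y_i$ has the form $X^{G_i,\psi_i}_{i}$ for some $\square_{G_i}\psi_i\in\Lambda$ with $i\in G_i$. Let $\mathcal{Y} = \{Y_i\mid i\in G\}$. I want to show that some family $\{X^{G,\psi}_i\mid i\in G\}$ (the ``canonical'' witness for $\square_G\psi$) is forced to sit inside $\mathcal{Y}$, which will let me recover $\square_G\psi\in\Lambda$. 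The key leverage is the contrapositive of Lemma \ref{lem:aux}: if no full family $\{X^{G',\varphi'}_i\mid i\in G'\}$ were contained in $\mathcal{Y}$, then $\bigcap\mathcal{Y}$ would contain $\{(\Lambda',f_0)\in W^c\}$ for some fixed $f_0$ and \emph{all} MCS $\Lambda'$ — in particular it would contain a world $(\Lambda',f_0)$ with $\psi\notin\Lambda'$ (such a $\Lambda'$ exists provided $\psi$ is not a theorem, the degenerate case being handled separately), contradicting $\bigcap\mathcal{Y} = \|\psi\|^{\mathfrak{M}^c}$. Hence some complete family $\{X^{G',\varphi'}_i\mid i\in G'\}\subseteq\mathcal{Y}$; I then argue this forces $G' = G$ and, via Lemma \ref{lem:aux2} together with the fact that $\bigcap\mathcal{Y}=\|\psi\|^{\mathfrak{M}^c}=\|\varphi'\|^{\mathfrak{M}^c}$, that $\psi$ and $\varphi'$ are equivalent in the logic.

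The delicate part will be extracting $\square_G\psi\in\Lambda$ from the information that $\mathcal{Y}$ contains a complete family and that the indices appearing in $\mathcal{Y}$ range over exactly $G$. This is precisely where axioms \INTe--\INTs\ must be invoked: the various $Y_i = X^{G_i,\psi_i}_i$ come from potentially different formulas $\square_{G_i}\psi_i\in\Lambda$ over potentially different (overlapping) index sets $G_i$, and I must combine these using \INTe\ (to conjoin disjoint pieces), \INTv\ and \INTs\ (to adjust the index sets and collapse redundant copies), and \INTz\ (for the degenerate $\psi = \top$ boundary case) so as to derive $\square_G\psi\in\Lambda$ inside the maximal consistent set. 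I expect this syntactic bookkeeping, matching the set-theoretic containment in the canonical model against the provability of $\square_G\psi$, to be the technical heart of the proof; the copies indexed by $\mathbb{F}$ are exactly the device that guarantees no \emph{spurious} intersections arise, so that the only way $\|\psi\|^{\mathfrak{M}^c}$ can be generated is via a genuine derivation of $\square_G\psi$.
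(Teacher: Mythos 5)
Your skeleton matches the paper's proof: the easy direction goes through Lemma \ref{lem:aux2} exactly as you say, and the hard direction does start from the contrapositive of Lemma \ref{lem:aux}, with the case of a tautological $\psi$ handled separately via \INTz. But the two pivotal claims you make in the hard direction are false, and the part you defer as ``syntactic bookkeeping'' is precisely what cannot be carried out from the position your claims leave you in. Concretely: a complete family $\{X^{G',\varphi'}_j \mid j\in G'\}\subseteq\mathcal{Y}$ need not have $G'=G$, and $\varphi'$ need not be equivalent to $\psi$. Take $G=\{1,2\}$ and $\Lambda$ containing $\square_{\{1\}}p$ and $\square_{\{2\}}q$: the witnesses $Y_1 = X^{\{1\},p}_{1} = \|p\|^{\mathfrak{M}^c}$ and $Y_2 = X^{\{2\},q}_{2} = \|q\|^{\mathfrak{M}^c}$ give $\bigcap\mathcal{Y} = \|p\wedge q\|^{\mathfrak{M}^c}$, so here $\psi = p\wedge q$, while the only complete families inside $\mathcal{Y}$ are the singletons for $(\{1\},p)$ and $(\{2\},q)$: both have $G'\subsetneq G$, and neither $p$ nor $q$ is equivalent to $p\wedge q$. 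In general a single complete family only yields $\|\psi\|^{\mathfrak{M}^c} = \bigcap\mathcal{Y}\subseteq\bigcap_{j\in G'} X^{G',\varphi'}_j = \|\varphi'\|^{\mathfrak{M}^c}$, i.e.\ $\vdash\psi\rightarrow\varphi'$ --- one implication, for one formula --- and from $\square_{G'}\varphi'\in\Lambda$ together with $\vdash\psi\rightarrow\varphi'$ one cannot soundly infer $\square_G\psi$.

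The paper's proof instead works with \emph{all} complete families at once: it collects them in a set $\mathcal{B}$, whose index sets are pairwise disjoint (by the injectivity of $(H,\chi,j)\mapsto X^{H,\chi}_j$ for non-theorems $\chi$) and whose union $K$ satisfies $K\subseteq G$. Applying Lemma \ref{lem:aux} to the leftover, incomplete part of the witness family and restricting to a single copy $f'$ gives $\vdash\bigwedge_{(H,\chi)\in\mathcal{B}}\chi\leftrightarrow\psi$ (in your example: $p\wedge q\leftrightarrow\psi$); then \INTe\ --- applicable precisely because the $H$'s are disjoint --- and \axref{RE} yield $\square_K\psi\in\Lambda$. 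Since $K$ may still be a proper subset of $G$, a second manoeuvre is needed: for each $i\in G$, its witness $X^{G_i,\psi_i}_{i}\supseteq\|\psi\|^{\mathfrak{M}^c}$, evaluated on a copy $f_i$ with $f_i(G_i,\psi_i)=i$, gives $\vdash\psi\rightarrow\psi_i$, hence $\square_{G_i}(\psi\vee\psi_i)\in\Lambda$ by \axref{RE}; then \INTs, applied $|G|$ times, climbs from $\square_K\psi$ up to $\square_{K\cup\bigcup_{i\in G}G_i}\psi$, and finally \INTv\ descends to $\square_G\psi$, using $K\subseteq G\subseteq K\cup\bigcup_{i\in G}G_i$. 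So \INTs\ and \INTv\ are not generic tools for ``adjusting index sets'': they implement a specific up-then-down argument that requires both the disjoint decomposition $\mathcal{B}$ and the per-agent implications $\vdash\psi\rightarrow\psi_i$, neither of which your single-family set-up provides.
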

\begin{proof} By an induction on the complexity of $\varphi$. The base case and the induction step for the classical connectives are safely left to the reader. So it remains to prove that
\begin{itemize}
\item[] $\mathfrak{M}^c,(\Lambda,f)\models \square_G\varphi$ iff $\square_G\varphi\in\Lambda$ \hfill{(TL$\square$)}
\end{itemize}
\noindent \emph{Right to left direction of (TL$\square$).} Suppose that $\square_G\varphi\in\Lambda$. By Lemma \ref{lem:aux2},

\begin{align} \bigcap_{i\in G} X^{G,\varphi}_{i} = \{(\Lambda',f')\in W^c\mid \varphi\in\Lambda'\}\end{align}


So by the induction hypothesis (IH), we obtain:

\begin{align}\bigcap_{i\in G} X^{G,\varphi}_{i} = \|\varphi\|^{\mathfrak{M}^c}\end{align}

Moreover, by Definition \ref{def:canmodbase}.3, for every $i\in G$, $X^{G,\varphi}_{i} \in\mathcal{N}^c_i(\Lambda,f)$. By Definition \ref{def:model}, $\bigcap_{i\in G} X^{G,\varphi}_{i} \in \mathcal{N}_G(\Lambda,f)$. By Definition \ref{def:semclauses}, $\mathfrak{M}^c,(\Lambda,f)\models \square_G\varphi$.

\medskip

\noindent \emph{Left to right direction of (TL$\square$).} Suppose that $\mathfrak{M}^c,(\Lambda,f)\models \square_G\varphi$. So there is an $\mathcal{X} = \{ X_{i} \mid i\in G\}$ such that each $X_{i}\in \mathcal{N}^{c}_i(\Lambda,f)$ and
\begin{align} \bigcap \mathcal{X} = \|\varphi\|^{\mathfrak{M}^c}\label{eq:bigcap}\end{align}

By Definition \ref{def:canmodbase}.4, for all $i\in G$, $X_{i} = X^{H,\psi}_{j}$ for some $H,\psi$ and $j\in H$ such that $\square_H\psi\in \Lambda$. Let $\mathcal{A} = \{(H,\psi) \in\mathbb{G}\times\mathfrak{L} \mid X^{H,\psi}_{j}  \in\mathcal{X}\}$. Note that, since $G$ is finite, also $\mathcal{X}$ and $\mathcal{A}$ are finite.

We distinguish two cases:

\noindent Case 1: $\varphi$ is a tautology of the base logic. By the IH, $\|\varphi\|^{\mathfrak{M}^c} = W$. Hence each $X_{i} = W$. In view of Definition \ref{def:canmodbase}.4, for all $(H,\psi)\in\mathcal{A}$, $\psi$ is also a tautology and hence, by \axref{RE}, $\square_H\top \in \Lambda$. By \INTz, for all $j\in H$, $\square_j\top\in \Lambda$. It follows that $\square_i \top\in \Lambda$ for all $i\in G$. Since $G$ is finite, we can derive $\square_G\top$ using \INTe\ finitely many times. By \axref{RE}, $\square_G\varphi\in\Lambda$.

\noindent Case 2: $\varphi$ is not a tautology of the base logic. We first prove that, for some $K\subseteq G$, $\square_K\varphi\in \Lambda$. Let $\mathcal{B} = \{(H,\psi)\in\mathcal{A} \mid \mbox{ for all } j\in H, X^{N,\psi}_{j}\in \mathcal{X}\mbox{ and }\not\vdash\psi\}$. Note that\footnote{\label{reffootnote}To see why (\ref{eq:sqsubset}) and (\ref{eq:notcap}) hold, note that for each $(H,\psi)\in \mathcal{B}$ and each $j\in H$, there is a witness $X^{H,\psi}_{j}\in \mathcal{X}$ in view of the definition of $\mathcal{B}$. Note moreover that, by Definition \ref{def:canmodbase}.4, $X^{H,\psi}_{j}\neq X^{H',\psi'}_{j'}$ whenever $(H,\psi)\neq (H',\psi')$ or $j\neq j'$. So for every tuple $\epsilon = \langle H,\psi,j\rangle$ with $(H,\psi)\in\mathcal{B}$ and $j\in H$, there is a distinct $i_\epsilon \in G$ such that $X^{H,\psi}_{j} = X_{i_\epsilon}$.} 

\begin{align} \bigcup_{(H,\psi) \in \mathcal{B}} H \subseteq G \label{eq:sqsubset} \end{align}

\begin{align} \mbox{For all } (H,\psi), (H',\psi')\in\mathcal{B}: (H,\psi) = (H',\psi') \mbox{ or } H\cap H' = \emptyset \label{eq:notcap} \end{align}

We can now rewrite the intersection of the members of $\mathcal{X}$ as follows:

\begin{align}
\bigcap\mathcal{X} = \bigcap_{(H,\psi)\in\mathcal{A}\setminus \mathcal{B}, X^{H,\psi}_{j}\in \mathcal{X}} X^{H,\psi}_{j} \cap \bigcap_{(H,\psi)\in \mathcal{B}, j\in H} X^{H,\psi}_{j}
\end{align}

By Lemma \ref{lem:aux}, there is an $f'\in\mathbb{F}$ such that

\begin{align}
\{(\Lambda',f')\in W^c\}\subseteq \bigcap_{(H,\psi)\in\mathcal{A}\setminus \mathcal{B}, X^{H,\psi}_{j}\in \mathcal{X}} X^{H,\psi}_{j}
\end{align}

In view of Definition \ref{def:canmodbase}, Lemma \ref{lem:aux2}, and the IH,
\begin{align}
\bigcap_{(H,\psi)\in \mathcal{B}} \{(\Lambda',f') \in W^c\mid \psi \in\Lambda'\} = \{(\Lambda',f') \in W^c\mid \varphi\in\Lambda'\} \label{eq:Lambdakvarphi}
\end{align}
Hence, every MCS that contains every member of $\{\psi \mid (H,\psi)\in \mathcal{B}\}$ also contains $\varphi$, and vice versa. Since $\mathcal{B}$ is finite, this amounts to: 

\begin{align}\vdash\bigwedge_{(H,\psi)\in \mathcal{B}}\psi \leftrightarrow \varphi.\label{RefFML}\end{align}

By Definition \ref{def:canmodbase}.3 and the fact that $X_j^{H,\psi}\neq X_{j'}^{H',\psi}$ when $\psi$ is not a tautology and $j\neq j'$, $H\neq H'$ or $\psi\neq\psi'$ (cf.\ footnote \ref{reffootnote}) we have $\square_{H}\psi\in\Lambda$ for all $(H,\psi)\in \mathcal{B}$. Let $K  = \bigcup_{(H,\psi)\in \mathcal{B}} H$. Applying \INTe\, a suitable number of times, we can derive that $\square_{K} \bigwedge_{(H,\psi)\in \mathcal{B}}\psi \in \Lambda$. By \axref{RE} and (\ref{RefFML}),

\begin{align} \square_{K}\varphi\in \Lambda \label{eq:varphiinLambda}\end{align}

Let now $i\in G$. In view of the construction, there is an $X^{H_i,\psi_i}_{i} \in\mathcal{X}$ such that

\begin{align}
\square_{H_i} \psi_i \in \Lambda \label{eq:psiiinLambda}
\end{align}

Since $\bigcap\mathcal{X} = \|\varphi\|^{\mathfrak{M}^c}$, it follows that $X^{H_i,\psi_i}_{i} \supseteq \|\varphi\|^{\mathfrak{M}^c}$. Let $f_i\in \mathbb{F}$ be such that $f_i(H_i)=i$. Hence, $X^{H_i,\psi_i}_{i} \cap \{(\Lambda,f_i)\in W^c\} = \{(\Lambda,f_i)\in W^c\mid \psi_i\in \Lambda\}$. This implies that $\{(\Lambda,f_i)\in W^c\mid \psi_i\in \Lambda\} \supseteq \{(\Lambda,f_i) \mid \varphi\in\Lambda\}$, and hence
\begin{align}
\vdash \varphi \rightarrow \psi_i \label{eq:varphiveepsiinLambda}
\end{align}
This implies that,
\begin{align}
\vdash \psi_i \leftrightarrow (\varphi\vee \psi_i) \label{eq:varphiveepsiinLambda2}
\end{align}
By \axref{RE}, and since $i\in G$ was arbitrary, we have shown that for all $i\in G$, $\square_{H_i}(\varphi\vee\psi_i) \in \Lambda$. Let $G = \{i_1,\ldots,i_n\}$. Now we apply \INTs\ a $n$ times to derive $\square_{K\cup H_{i_1}} \varphi$, $\square_{K\cup H_{i_1}\cup H_{i_2}} \varphi$, etc., untill we finally arrive at $\square_{K\cup \bigcup_{i\in G} H_i} \varphi$. Note that $K\subseteq G\subseteq K\cup \bigcup_{i\in G}H_i$. From this and (\ref{eq:varphiinLambda}), we can derive that $\square_G \varphi\in\Lambda$ by \INTv.
\end{proof}

\section{Some Extensions}\label{sec:comp}

We now turn to a number of variants, obtained by imposing certain frame conditions on the neighbourhood functions $\mathcal{N}_i$. As will turn out, quite a number of additional frame conditions on the $\mathcal{N}_i$ do not impact the axiomatization for intersection neighbourhoods at all. Most results provided here will turn out to be relatively straightforward, building on our canonical model construction and completeness proof for the base logic.  {The proofs of all theorems in this section are slight adaptions of the argument for Theorem \ref{basethm}. We offer some details on the proofs   in the appendix.}



\subsection{Some Extensions on the Cheap}\label{sec:extcheap}

We first discuss some axioms, resp.\ frame conditions that require no changes in the construction of the canonical model, cf.\ Table \ref{table:cheapcond}. \axref{NEC} and \axref{P} are familiar from the study of Kripke-semantics. Adding \axref{CONEC} to any normal modal logic will result in a trivial system; adding \axref{COP} to any normal modal logic will result in a logic where the modal operator becomes useless (since $\square\varphi$ will be a theorem for all $\varphi$). However, in the context of non-normal modal logics, both axioms can sometimes make sense. The axiom \axref{CONEC} is not often mentioned; one of its concrete applications is in (non-normal) logics of agency \cite{Elgesem:1997}. The underlying idea is that an agent $i$ cannot (deliberately) bring about a tautology like ``the dishes are washed or they are not washed''. The axiom \axref{COP} has been used to characterize the notion of ``deontic sufficiency'' \cite{F:DNS}, often referred to as ``strong permission''. Here, $\square\varphi$ means that every $\varphi$-world is a permissible world; the axiom then follows trivially from the fact that no world verifies $\bot$.

As far as these conditions are concerned, our results are modular, in the sense that the frame conditions can be axiomatized independently; and we can moreover restrict each of them to certain groups $G$. This means that we can e.g.\ model cases where only one of the operators $\square_i$ satisfies necessitation, whereas the others do not. 


\verberg{
\begin{table}
\begin{tabular}{|ccc|}

\hline



(NEC$_G$) & $W\in \mathcal{N}_G(w)$ & $\vdash \square_G \top$  \\

(CONEC$_G$) & $W\not\in \mathcal{N}_G(w)$ & $\vdash \neg\square_G \top$ \\

\axref{P$_G$} & $\emptyset\not\in \mathcal{N}_G(w)$ & $\vdash \neg \square_G \bot$ \\

(COP$_G$) & $\emptyset\in \mathcal{N}_G(w)$ & $\vdash \square_G \bot$ \\

\hline

\end{tabular}
\caption{Some extensions on the cheap. Here, we always quantify universally over $w$.}\label{table:cheapcond}
\end{table}
}

\begin{table}


\begin{center}
\begin{minipage}{.6\textwidth}
\begin{align*}
&W\in \mathcal{N}_i(w) && \vdash \square_i \top\axlabel{NEC}  \\[.3em]
 &W\not\in \mathcal{N}_i(w) && \vdash \neg\square_i \top\axlabel{CONEC} \\[.3em]
 &\emptyset\not\in \mathcal{N}_i(w) && \vdash \neg \square_i \bot\axlabel{P}  \\[.3em]
 &\emptyset\in \mathcal{N}_i(w) && \vdash \square_i \bot\axlabel{COP}
&\end{align*}
\end{minipage}
\end{center}
\caption{Some extensions on the cheap. Here, $i$ ranges over all agents in $I$ and we always quantify universally over $w$.}\label{table:cheapcond}
\end{table}

\begin{theorem}\label{th:SCcheap} The logic of any selection of frame conditions from Table \ref{table:cheapcond} is axiomatized by adding the corresponding axioms from that table to the base logic.\end{theorem}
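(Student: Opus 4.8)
The plan is to reuse, essentially verbatim, the canonical model construction and the Truth Lemma from the proof of Theorem~\ref{basethm}. Throughout, write $L^+$ for the logic obtained by adding the selected axioms from Table~\ref{table:cheapcond} to the base logic. Soundness is routine: for a singleton group we have $\mathcal{N}_{\{i\}}(w) = \mathcal{N}_i(w)$ by Definition~\ref{def:model}, so $\square_i\top$ holds at $w$ iff $W\in\mathcal{N}_i(w)$ and $\square_i\bot$ holds at $w$ iff $\emptyset\in\mathcal{N}_i(w)$; each axiom thus corresponds exactly to its frame condition, and validity over the relevant class follows by inspecting the semantic clause.

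For completeness I would build $\mathfrak{M}^c$ exactly as in Definition~\ref{def:canmodbase}, now taking the MCSs of $L^+$. Since $L^+$ still contains all of \INTe--\INTs\ together with \axref{RE} and \axref{MP}, Lemmata~\ref{lem:aux}--\ref{lem:aux2} and the entire proof of the Truth Lemma (Lemma~\ref{lem:truth}) carry over word for word: the construction is unchanged in form, and nothing in those proofs uses the \emph{absence} of further axioms. The only new obligation is to show that $\mathfrak{M}^c$ in fact lies in the class of frames cut out by the selected conditions; once that is done, the standard Lindenbaum argument finishes the proof, as any $L^+$-consistent set extends to an MCS $\Lambda$, hence to a world $(\Lambda,f)\in W^c$ at which it holds by the Truth Lemma.

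The heart of the matter is therefore translating each frame condition into a statement about the canonical neighbourhoods. The key combinatorial facts to establish are: $X^{G,\varphi}_i = W^c$ iff $\vdash\varphi$ (since some $f\in\mathbb{F}$ has $f(G,\varphi)=i$), whereas $X^{G,\varphi}_i = \emptyset$ iff $G=\{i\}$ and $\vdash\neg\varphi$ (since emptiness forces $f(G,\varphi)=i$ for \emph{every} $f$, which can only happen for a singleton). From these one reads off that $W^c\in\mathcal{N}^c_i(\Lambda,f)$ iff $\square_G\varphi\in\Lambda$ for some $G\ni i$ with $\vdash\varphi$ --- equivalently, via \axref{RE} and one application of \INTz, iff $\square_i\top\in\Lambda$ --- and that $\emptyset\in\mathcal{N}^c_i(\Lambda,f)$ iff $\square_i\varphi\in\Lambda$ for some $\varphi$ with $\vdash\neg\varphi$ --- equivalently, via \axref{RE}, iff $\square_i\bot\in\Lambda$. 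Each axiom now forces its condition: adding $\square_i\top$ (resp.\ $\square_i\bot$) places the witness $X^{\{i\},\top}_i = W^c$ (resp.\ $X^{\{i\},\bot}_i=\emptyset$) into every $\mathcal{N}^c_i(\Lambda,f)$, while adding $\neg\square_i\top$ (resp.\ $\neg\square_i\bot$) makes the characterizing formula inconsistent with $\Lambda$, so the offending set is absent. As these four verifications are mutually independent, they combine freely, which also delivers the stated modularity.

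I expect the asymmetry between the two membership computations to be the main subtlety. Whereas $W^c$ may enter $\mathcal{N}^c_i$ through a neighbourhood coming from \emph{any} supergroup $G\ni i$ (so that \INTz\ is genuinely needed to descend to $\{i\}$), emptiness of $X^{G,\varphi}_i$ can only arise from the singleton $G=\{i\}$, so no analogous descent is available or required for \axref{P}/\axref{COP}. Keeping track of this asymmetry --- and noting the harmless degenerate case where a mutually contradictory selection, such as \axref{NEC} together with \axref{CONEC}, trivializes both the logic and the (empty) frame class on the two sides simultaneously --- is the only place where care is needed.
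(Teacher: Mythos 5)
Your proposal is correct and takes essentially the same route as the paper's own proof: it reuses the canonical model of Definition \ref{def:canmodbase} over the MCSs of the extended logic, observes that Lemmas \ref{lem:aux}, \ref{lem:aux2} and the Truth Lemma carry over verbatim, and verifies each selected frame condition by characterizing exactly when $X^{G,\varphi}_{i}$ equals $W^c$ or $\emptyset$. Your \INTz-based descent from $\square_G\top$ to $\square_i\top$ in the \axref{CONEC} case is precisely what the paper packages as Lemma \ref{inheritancelem}, so there is no substantive difference.
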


We should highlight that in the cases of \axref{NEC}, \axref{CONEC} and \axref{COP}, the corresponding frame condition also holds for the $\mathcal{N}_G(w)$. For instance, as soon as $W\in \mathcal{N}_i(w)$ for all $i\in I$, we can infer that $W\in\mathcal{N}_G(w)$ for all $G\subseteq_f I$. At the syntactic level, this is mirrored by the following property:

\begin{lemma}\label{inheritancelem}
For any extension $\vdash$ of the base logic: if for all $i\in I$, $\vdash \square_i \top$ (resp.  $\vdash \neg\square_i \top$ or $\vdash \square_i \bot$ ), then for all $G\subseteq_f I$, $\vdash \square_G \top$ (resp.  $\vdash \neg\square_G \top$ or $\vdash \square_G \bot$ ).
\end{lemma}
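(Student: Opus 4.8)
The plan is to split the statement into its three independent claims, since each rides on a different base-logic axiom. Throughout fix a finite group $G=\{i_1,\dots,i_n\}\subseteq I$; I take $G\neq\emptyset$, as is the standing convention for group indices.

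The $\neg\square_G\top$ case is the quickest: it needs only a single instance of \INTz. Choosing any $i\in G$ and instantiating \INTz\ with $\{i\}$ playing the role of $G$ and $G\setminus\{i\}$ playing the role of $H$ (so that the axiom's $G\cup H$ is exactly our $G$), we obtain the theorem $\square_G\top\to\square_i\top$. The hypothesis gives $\vdash\neg\square_i\top$, so modus tollens --- available from \sys{CL} together with \axref{MP} --- yields $\vdash\neg\square_G\top$.

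The $\square_G\top$ and $\square_G\bot$ cases are handled uniformly by induction on $n=|G|$, with \INTe\ as the engine; write $\chi$ for the relevant constant ($\top$ or $\bot$). In the base case $n=1$ we have $G=\{i_1\}$ and the hypothesis gives $\vdash\square_{i_1}\chi=\square_G\chi$ directly. For the step, suppose $\vdash\square_{G_k}\chi$ where $G_k=\{i_1,\dots,i_k\}$, and set $G_{k+1}=G_k\cup\{i_{k+1}\}$. Since $i_{k+1}\notin G_k$, the sets $G_k$ and $\{i_{k+1}\}$ are disjoint, so \INTe\ instantiated with $\varphi=\psi=\chi$ gives $(\square_{G_k}\chi\wedge\square_{i_{k+1}}\chi)\to\square_{G_{k+1}}(\chi\wedge\chi)$. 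Combining the induction hypothesis, the hypothesis $\vdash\square_{i_{k+1}}\chi$, and \axref{MP} yields $\vdash\square_{G_{k+1}}(\chi\wedge\chi)$; and since $\chi\wedge\chi$ is \sys{CL}-equivalent to $\chi$, \axref{RE} delivers $\vdash\square_{G_{k+1}}\chi$. After $n-1$ steps this produces $\vdash\square_G\chi$, as required. This is exactly the reasoning used in Case 1 of the proof of Lemma \ref{lem:truth}.

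There is no real obstacle here; the argument is entirely routine. The only point demanding a little care is the disjointness side-condition on \INTe, and this is precisely why the induction builds $G$ up one fresh singleton at a time: at every step the accumulated set $G_k$ is disjoint from the newly added $\{i_{k+1}\}$, so the side-condition is automatically met. Note also that nothing in the argument uses \INTv\ or \INTs, nor any property of the extension $\vdash$ beyond its containing the base logic --- which is what makes the statement hold for every such extension.
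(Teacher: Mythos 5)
Your proof is correct and follows essentially the same route as the paper's: iterated application of \INTe\ (with \axref{RE} to collapse $\chi\wedge\chi$ to $\chi$) for the $\square_G\top$ and $\square_G\bot$ cases, and \INTz\ plus contraposition for the $\neg\square_G\top$ case. You merely spell out the induction and the disjointness side-condition that the paper leaves implicit in the phrase ``iterated application of \INTe''.
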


In other words, the three mentioned frame conditions and the corresponding axioms readily transfer from single indices to groups. Consequently, imposing these frame conditions on groups rather than individual indices will not make any difference to the logic.

This is not true for \axref{P}. It is easy to construct a model with $I=\{1,2\}$ where $\emptyset\not\in\mathcal{N}_i(w)$ for $i\in \{1,2\}$ and all $w$, but $\emptyset\in\mathcal{N}_{\{1,2\}}(w)$ for some (or even all) $w$.

\begin{theorem}\label{th:SCcheapfor all } The logic of frame condition $\emptyset\not\in \mathcal{N}_G(w)$ in conjunction with any selection of frame conditions from Table \ref{table:cheapcond} is axiomatized by adding to the base logic the corresponding axioms from that table and all instances of the following axiom schema:
\begin{align*}
\neg\square_G\bot\axlabel{P$_G$}
\end{align*}
\end{theorem}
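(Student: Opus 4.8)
The target is Theorem \ref{th:SCcheapfor all }, which asks for strong completeness of the base logic extended by the group-level seriality condition $\emptyset\notin\mathcal{N}_G(w)$, axiomatized via the schema \axref{P$_G$}: $\neg\square_G\bot$ for all $G\subseteq_f I$. Soundness is routine: if $\emptyset\notin\mathcal{N}_G(w)$ then $\|\bot\|^{\mathfrak{M}}=\emptyset\notin\mathcal{N}_G(w)$, so $\mathfrak{M},w\not\models\square_G\bot$. The real work is completeness, and the plan is to re-run the canonical model construction of Definition \ref{def:canmodbase} essentially unchanged, and then argue that the extra axiom schema forces the canonical model to satisfy the new frame condition. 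Concretely, I would first observe that adding \axref{P$_G$} to the base logic only enlarges the set of theorems, hence shrinks the set of MCSs, but the definition of $W^c$, $V^c$, and the $\mathcal{N}^c_i$ is syntactic and goes through verbatim for the extended logic. The key point to verify is that the Truth Lemma (Lemma \ref{lem:truth}) still holds, which it does provided its proof never used the \emph{absence} of \axref{P$_G$}; inspection shows it only uses \INTe--\INTs\ and \axref{RE}, so the Truth Lemma transfers immediately.

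\textbf{Main step.} The crux is showing that the canonical model for the extended logic actually validates $\emptyset\notin\mathcal{N}^c_G(\Lambda,f)$ for every world $(\Lambda,f)$ and every $G\subseteq_f I$. Suppose toward a contradiction that $\emptyset\in\mathcal{N}^c_G(\Lambda,f)$. By Definition \ref{def:model} this means there are sets $X_i\in\mathcal{N}^c_i(\Lambda,f)$ for $i\in G$ with $\bigcap_{i\in G}X_i=\emptyset$. Since $\emptyset=\|\bot\|^{\mathfrak{M}^c}$ by the Truth Lemma, this is precisely the situation $\mathfrak{M}^c,(\Lambda,f)\models\square_G\bot$, so by the Truth Lemma $\square_G\bot\in\Lambda$. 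But \axref{P$_G$} gives $\vdash\neg\square_G\bot$, contradicting the consistency of $\Lambda$. Hence $\emptyset\notin\mathcal{N}^c_G(\Lambda,f)$, so $\mathfrak{M}^c$ lies in the intended class of models.

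\textbf{Combining with the cheap extensions.} To handle an arbitrary selection of the Table \ref{table:cheapcond} conditions \emph{simultaneously} with seriality, I would rely on the modularity already established for Theorem \ref{th:SCcheap}: each of \axref{NEC}, \axref{CONEC}, \axref{P}, \axref{COP} forces the corresponding frame condition on the canonical model by the same pattern (each axiom, via the Truth Lemma, pins down whether $W$ or $\emptyset$ belongs to $\mathcal{N}^c_i(\Lambda,f)$). These arguments are independent of one another and independent of the seriality argument above, since they constrain different membership facts about the neighbourhood functions. Therefore the canonical model validates all chosen conditions at once, and any consistent set $\Gamma$ extends to an MCS $\Lambda$ which, paired with any $f\in\mathbb{F}$, yields a world $(\Lambda,f)$ satisfying exactly $\Gamma$ by the Truth Lemma. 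This establishes strong completeness.

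\textbf{Expected obstacle.} The only subtlety I anticipate is confirming that imposing \axref{P$_G$} as a \emph{group}-level schema (rather than, as in Lemma \ref{inheritancelem}, inheriting a single-index condition) does not interfere with the reasoning that proves the Truth Lemma, in particular the tautology-handling Case 1 and the \INTe/\INTs/\INTv\ manipulations in Case 2. Since none of those steps appeals to seriality in any form, the transfer is clean; the mild care required is simply to note that \axref{P$_G$} must be stated for all finite $G$ (not merely singletons), because, as remarked just before the theorem, seriality does \emph{not} inherit from individual indices to groups. This is exactly why the schema quantifies over all $G\subseteq_f I$, and it is the one place where the proof genuinely differs from the inheritance pattern of the other three conditions.
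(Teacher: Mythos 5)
Your proposal is correct and follows essentially the same route as the paper's own proof: run the canonical model construction of Definition \ref{def:canmodbase} with MCSs of the extended logic, observe that the auxiliary lemmata and Truth Lemma transfer unchanged, and then use the Truth Lemma together with $\neg\square_G\bot\in\Lambda$ and the consistency of each MCS to conclude $\emptyset\not\in\mathcal{N}^c_G(\Lambda,f)$. Your handling of the combination with the Table \ref{table:cheapcond} conditions via the modularity of Theorem \ref{th:SCcheap} also matches the paper's remark that the reasoning for those axioms is completely analogous and independent.
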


Some combinations of the axioms from Table \ref{table:cheapcond} obviously result in a trivial logic if we use the same $G$ everywhere. Note also that adding \axref{NEC} to the base logic allows us to derive the following theorem, using \INTe:

\begin{align*}
\square_G \varphi\rightarrow \square_{G\cup H}\varphi \axlabel{SA}
\end{align*}


\axref{SA} stands for \emph{superadditivity}, which is the common name used for this type of axiom in logics of (group) agency, (distributed) belief, and (distributed) knowledge. In the presence of \axref{SA}, the axioms \INTz, \INTv, and \INTs\, become derivable. So we obtain a very simple alternative characterization of the logic of all models where, for all $i\in I$, $W\in \mathcal{N}_i(w)$: all we need is \INTe\, and \axref{NEC}.



\subsection{The T-schema}

In the remainder of this section, we will point out a few completeness results that are less modular, in the sense that they concern frame conditions that are imposed on all the neighbourhoods $\mathcal{N}_i(w)$ for all $i\in I$ at once, rather than for a selection of them. We start with the T-schema: $\square\varphi\rightarrow \varphi$. Let us call a neighbourhood function \emph{reflexive} iff, for every $w\in W$ and for every $X\in\mathcal{N}(w)$, $w\in X$.

\begin{theorem}\label{reflthm}
The logic of the class of models $\mathfrak{M} = \langle W,\langle\mathcal{N}_i\rangle_{i\in I},V\rangle$ where each $\mathcal{N}_i$ is reflexive is axiomatized by adding to the base logic all instances of the following axiom schema:

\begin{align*}
\square_G\varphi \rightarrow \varphi \axlabel{T$_G$}
\end{align*}
\end{theorem}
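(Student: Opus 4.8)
The plan is to prove Theorem \ref{reflthm} by the same canonical model technique used for the base logic, modifying only the construction so that each $\mathcal{N}^c_i$ becomes reflexive, while recovering the existing Truth Lemma almost verbatim. Soundness is routine: if each $X \in \mathcal{N}_i(w)$ contains $w$, then any pointwise intersection $\bigcap_{i \in G} X_i \in \mathcal{N}_G(w)$ also contains $w$ (being an intersection of sets each containing $w$), so $\|\varphi\|^{\mathfrak{M}} \in \mathcal{N}_G(w)$ forces $w \in \|\varphi\|^{\mathfrak{M}}$, i.e.\ $\square_G\varphi \to \varphi$ is valid. The work is all in completeness.

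First I would set up the canonical model exactly as in Definition \ref{def:canmodbase}, but with one change designed to guarantee reflexivity: the point $(\Lambda, f)$ must belong to every set $X^{G,\varphi}_i$ that we place in $\mathcal{N}^c_i(\Lambda, f)$. Recall $X^{G,\varphi}_i$ contains $(\Lambda, f)$ iff $\varphi \in \Lambda$ or $f(G,\varphi) \neq i$. The set $X^{G,\varphi}_i$ is placed in $\mathcal{N}^c_i(\Lambda,f)$ precisely when $\square_G\varphi \in \Lambda$ and $i \in G$. Under the new axiom \axref{T$_G$} we have $\square_G\varphi \to \varphi$, so whenever $\square_G\varphi \in \Lambda$ we also get $\varphi \in \Lambda$ (since $\Lambda$ is a MCS and \axref{T$_G$} is a theorem), and therefore $(\Lambda,f) \in X^{G,\varphi}_i$ automatically. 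Thus with the same definition every set in the neighbourhood $\mathcal{N}^c_i(\Lambda,f)$ already contains its own point $(\Lambda,f)$, so $\mathcal{N}^c_i$ is reflexive \emph{for free}, with no change to the construction at all. This is the key simplifying observation.

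Next I would verify the Truth Lemma. Because the construction is literally unchanged, Lemmata \ref{lem:aux} and \ref{lem:aux2} carry over verbatim, and both directions of (TL$\square$) go through exactly as in the proof of Lemma \ref{lem:truth}, since that proof only used \INTe--\INTs\ and \axref{RE}, all of which remain theorems of the extended logic. The reflexivity of $\mathcal{N}^c_i$, just established, is what makes the canonical model lie in the target class. Completeness then follows by the standard argument: given a consistent set $\Gamma$, extend it to a MCS $\Lambda$ by Lindenbaum, pick any $f \in \mathbb{F}$, and the Truth Lemma gives $\mathfrak{M}^c, (\Lambda,f) \models \Gamma$ in a model whose every $\mathcal{N}^c_i$ is reflexive.

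The main obstacle to watch for is precisely the claim that reflexivity of each individual $\mathcal{N}^c_i$ is what is needed, rather than a stronger condition on the derived $\mathcal{N}^c_G$; one must check that the semantic clause for $\square_G$ under pointwise intersection interacts correctly, i.e.\ that reflexivity of the $\mathcal{N}_i$ genuinely validates \axref{T$_G$} for \emph{all} finite $G$ and not merely for singletons. This is handled by the soundness observation above: an intersection of sets each containing $w$ still contains $w$, so the property transfers from individual neighbourhoods to every group neighbourhood automatically — mirroring the inheritance phenomenon recorded in Lemma \ref{inheritancelem} for \axref{NEC}, \axref{CONEC}, and \axref{COP}. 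Hence no separate group-level axiom (in the style of \axref{P$_G$}) is required, and \axref{T$_G$} suffices.
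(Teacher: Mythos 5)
Your proposal is correct and takes essentially the same approach as the paper: the canonical model of Definition \ref{def:canmodbase} is reused unchanged (over MCSs of the extended logic), the auxiliary lemmata and the Truth Lemma carry over verbatim, and reflexivity of the canonical neighbourhoods is established exactly by your observation that $\square_G\varphi\in\Lambda$ together with (T$_G$) gives $\varphi\in\Lambda$, whence $(\Lambda,f)\in X^{G,\varphi}_{i}$. The paper's appendix proof is precisely this argument, stated more tersely.
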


\noindent Importantly, one cannot get a complete axiomatization of reflexivity by just adding the axioms (T$_i$), i.e.\, $\square_i\varphi\rightarrow \varphi$ to the base logic. To see this, note that all axioms $\square_G\phi\rightarrow\phi$ for  $G\subseteq_f I$ are sound with respect to reflexive frames. The following example of a non-reflexive frame shows that these axioms do not logically follow from $\square_i\varphi\rightarrow\varphi$. We consider a simple case with   $I = \{1,2\}$. Take a model $\mathfrak{M}$ with two worlds, $w$ and $v$, where all propositional formulas are true at both worlds. Suppose now that $\mathcal{N}_1(w) = \mathcal{N}_1(v)= \{\{w\}\}$ and $\mathcal{N}_2(w) = \mathcal{N}_2(v) = \{\{v\}\}$. Since neither $\{w\}$ nor $\{v\}$ correspond to the truth set of any formula $\varphi$ in this model, $\square_1 \varphi$ and $\square_2\varphi$ will be false for every $\varphi$, and hence (T$_1$) and (T$_2$) will be trivially valid in this model. However, this model does not validate (T$_{\{1,2\}}$), since $\square_{\{1,2\}}\bot$ is true at $w$ and at $v$. So the model satisfies all formulas of the form $\square_i\phi\rightarrow\phi$ together with \INTe\,-\INTz, but not $\square_G\phi\rightarrow\phi$:

\subsection{Binary Consistency}

In any normal modal logic, \axref{P} is equivalent to the following axiom:

\begin{align*}
\square\varphi\rightarrow \neg\square\neg\varphi\axlabel{D}
\end{align*}
However, in neighbourhood models, the two axioms are non-equivalent. Whereas \axref{P} expresses that $W\not\in \mathcal{N}(w)$, \axref{D} expresses that if $X\in \mathcal{N}(w)$, then $W\setminus X\not\in \mathcal{N}(w)$. It can easily be verified that, by adding indexed variants of the \axref{D}-axiom, we get a complete logic for all frames that satisfy the following frame condition:
\begin{itemize}
\item[] \emph{Binary consistency}: for all $i\in I$: if $X\in\mathcal{N}_i(w)$, then $W\setminus X\not\in \mathcal{N}_i(w)$
\end{itemize}

\begin{theorem}\label{binconsthm}
The logic of the class of models $\mathfrak{M} = \langle W,\langle\mathcal{N}_i\rangle_{i\in I},V\rangle$ that satisfy binary consistency is axiomatized by adding to the base logic all instances of the following axiom schema, for all $i\in I$:

\begin{align*}
\square_i\varphi \rightarrow \neg\square_i\neg\varphi\axlabel{D$_i$}
\end{align*}
\end{theorem}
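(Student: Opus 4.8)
# Proof Proposal for Theorem \ref{binconsthm}

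The plan is to prove soundness and strong completeness separately, following the pattern established for Theorem \ref{basethm}. Soundness is routine: I would verify that every instance of \axref{D$_i$} holds in any model satisfying binary consistency. Concretely, suppose $\mathfrak{M},w\models \square_i\varphi$, so $\|\varphi\|^{\mathfrak{M}}\in \mathcal{N}_i(w)$. By binary consistency, $W\setminus \|\varphi\|^{\mathfrak{M}}\notin \mathcal{N}_i(w)$; since $W\setminus \|\varphi\|^{\mathfrak{M}} = \|\neg\varphi\|^{\mathfrak{M}}$, we get $\mathfrak{M},w\not\models \square_i\neg\varphi$, i.e.\ $\mathfrak{M},w\models \neg\square_i\neg\varphi$. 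The base-logic axioms remain sound for the same reasons as before, so the whole axiomatization is sound for the class of binary-consistent models.

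For completeness, the strategy is to reuse the canonical model construction from Definition \ref{def:canmodbase} essentially unchanged, and to show that the extra axioms \axref{D$_i$} force the canonical $\mathcal{N}^c_i$ to be binary consistent. Since we add only the schema \axref{D$_i$} (indexed by single agents $i\in I$, not by groups), the truth lemma argument of Lemma \ref{lem:truth} goes through verbatim — the construction of $W^c$, $V^c$, and the $\mathcal{N}^c_i$, together with Lemmata \ref{lem:aux} and \ref{lem:aux2}, does not depend on which extra axioms are present, and the induction step (TL$\square$) relies only on the base axioms \INTe--\INTs, which are still available. So what remains is to check that each $\mathcal{N}^c_i(\Lambda,f)$ satisfies binary consistency.

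To verify binary consistency in the canonical model, suppose toward a contradiction that for some $i\in I$ and some $(\Lambda,f)\in W^c$ we have both $X\in\mathcal{N}^c_i(\Lambda,f)$ and $W^c\setminus X\in\mathcal{N}^c_i(\Lambda,f)$. By Definition \ref{def:canmodbase}.3, these sets are of the form $X = X^{G,\varphi}_i$ with $\square_G\varphi\in\Lambda$ and $i\in G$, and likewise for the complement. The key step is to pin down the complement: using Definition \ref{def:canmodbase}.4, $X^{G,\varphi}_i = \{(\Lambda',f')\mid \varphi\in\Lambda' \text{ or } f'(G,\varphi)\neq i\}$, and I must argue that its complement, if it is itself a canonical neighbourhood set $X^{H,\psi}_i$, forces $\psi$ to be (provably equivalent to) $\neg\varphi$ modulo the relevant copy-index bookkeeping. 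Having established that the presence of complementary neighbourhoods in $\mathcal{N}^c_i(\Lambda,f)$ entails $\square_i\varphi\in\Lambda$ and $\square_i\neg\varphi\in\Lambda$ (or more precisely the group-membership witnesses giving both), I would invoke \axref{D$_i$} together with \axref{RE} to derive a contradiction with the maximal consistency of $\Lambda$.

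The main obstacle I anticipate is the complement-matching step, because the canonical neighbourhoods $X^{G,\varphi}_i$ are not literal truth sets but are perturbed by the copy-index function $f$: two syntactically different pairs $(G,\varphi)$ and $(H,\psi)$ can in principle generate sets that are complementary as subsets of $W^c$ only after accounting for the $f$-coordinate. The delicate part is to show that if $X^{G,\varphi}_i$ and $X^{H,\psi}_i$ are genuine complements in $W^c$, then $\varphi$ and $\neg\psi$ must coincide up to provable equivalence over the MCS coordinate, so that \axref{D$_i$} applies. I expect this will require an argument in the style of Lemma \ref{lem:aux2}, intersecting with a slice $\{(\Lambda',f_0)\mid f_0(\cdot)=i\}$ to strip away the copy-index contribution and expose the underlying truth sets $\|\varphi\|$ and $\|\psi\|$; once that reduction is made, complementarity of the sets yields $\vdash\varphi\leftrightarrow\neg\psi$, and the rest is immediate.
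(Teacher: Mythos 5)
Your soundness argument and your decision to reuse the canonical model of Definition \ref{def:canmodbase} unchanged (so that Lemmata \ref{lem:aux}, \ref{lem:aux2} and the truth lemma carry over verbatim) match the paper exactly. The gap is in your final step. Your plan is: complementary sets $X^{G,\varphi}_{i}, X^{H,\psi}_{i} \in \mathcal{N}^c_i(\Lambda,f)$ force $\vdash \psi\leftrightarrow\neg\varphi$ (via the slice argument), and then (D$_i$) plus \axref{RE} give a contradiction. But the construction only guarantees $\square_G\varphi\in\Lambda$ and $\square_H\psi\in\Lambda$ for the \emph{group} indices $G,H\ni i$, whereas the axiom you added is indexed by single agents, and the base logic has no principle that descends from $\square_G\varphi$ to $\square_i\varphi$ (\INTz\ does something of this kind, but only for $\top$). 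So when $G$ and $H$ properly contain $i$, your ``group-membership witnesses'' yield no contradiction at all: for instance, $\square_{\{1,2\}}p\wedge\square_{\{1,3\}}\neg p$ is consistent in the extended logic, being satisfiable in a binary-consistent model (take $W=\{a,b,c\}$, $\|p\|^{\mathfrak{M}}=\{a\}$, $\mathcal{N}_1(w)=\{\{a,b\},W\}$, $\mathcal{N}_2(w)=\{\{a,c\}\}$, $\mathcal{N}_3(w)=\{\{b,c\}\}$ for all $w$). Hence provable equivalence of the underlying formulas cannot by itself finish the proof; in the non-singleton case you must use the structure of $W^c$, not syntax.

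The missing idea --- and the heart of the paper's proof --- is a case split on \emph{definability}. If $X$ is a truth set $\|\chi\|^{\mathfrak{M}^c}$, then so is its complement ($=\|\neg\chi\|^{\mathfrak{M}^c}$), and since $\mathcal{N}_{\{i\}}=\mathcal{N}_i$, the truth lemma applied with the singleton group yields $\square_i\chi\wedge\square_i\neg\chi\in\Lambda$, contradicting (D$_i$) and consistency; this is where your syntactic argument belongs, and where it works. If neither $X$ nor its complement is definable, then both group indices must properly contain $i$ (a set $X^{G,\varphi}_{i}$ with $G=\{i\}$ \emph{is} literally a truth set), and then the two sets can never be disjoint in the first place: pick $f'\in\mathbb{F}$ with $f'(G,\varphi)=j\in G\setminus\{i\}$ and $f'(H,\psi)=k\in H\setminus\{i\}$; by Definition \ref{def:canmodbase}.4 every point $(\Lambda',f')$ lies in both sets, so they are not complements. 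Thus in exactly the case your syntactic reduction cannot handle, complementarity is structurally impossible --- which is why the theorem holds with (D$_i$) for individual agents only, and no group-indexed version of the axiom is needed.
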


\subsection{Closure under Supersets}\label{subsec:closuresubsets}

We call a model $\mathfrak{M} = \langle W,\langle \mathcal{N}_i\rangle_{i\in I}, V\rangle$ \emph{monotone} iff, for all $i\in I$ and all $w\in W$, $\mathcal{N}_i(w)$ is closed under supersets. This means that for all $X \in\mathcal{N}_i(w)$, for all $Y\subseteq W$ with $X\subseteq Y$, also $Y\in\mathcal{N}_i(w)$.

\begin{theorem}\label{closesuper}
The logic of the class of all monotone models is axiomatized by adding to the base logic all instances of the following axiom schema:
\begin{align*}
\square_G\varphi \rightarrow \square_G(\varphi \vee\psi) \axlabel{RM$_G$}
\end{align*}
\end{theorem}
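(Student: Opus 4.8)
The plan is to recycle the canonical model of Theorem~\ref{basethm}, after supplementing its neighbourhoods so that they become monotone. For soundness it suffices to observe that monotonicity transfers from the $\mathcal{N}_i$ to $\mathcal{N}_G$: if each $\mathcal{N}_i(w)$ is closed under supersets, $X\in\mathcal{N}_G(w)$ and $X\subseteq Y\subseteq W$, I would write $X=\bigcap_{i\in G}X_i$ with each $X_i\in\mathcal{N}_i(w)$ (Definition~\ref{def:model}) and put $Y_i=X_i\cup Y$. Then $X_i\subseteq Y_i\in\mathcal{N}_i(w)$ by monotonicity, and by distributivity of union over intersection $\bigcap_{i\in G}Y_i=(\bigcap_{i\in G}X_i)\cup Y=X\cup Y=Y$, so $Y\in\mathcal{N}_G(w)$; hence $\mathcal{N}_G$ is monotone and $(\mathrm{RM}_G)$ is valid on the class of monotone models.

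For completeness I would keep $\mathfrak{M}^c$ exactly as in Definition~\ref{def:canmodbase}, but replace clause~3 by its supplemented version
\[
\mathcal{N}^{c}_i(\Lambda,f)=\{X\subseteq W^c\mid X^{H,\chi}_{i}\subseteq X\text{ for some }\square_H\chi\in\Lambda\text{ with }i\in H\subseteq_f I\},
\]
which is closed under supersets by construction, so $\mathfrak{M}^c$ is monotone. The right-to-left direction of the Truth Lemma is then unaffected: whenever $\square_G\varphi\in\Lambda$ the canonical sets $X^{G,\varphi}_{i}$ still belong to $\mathcal{N}^{c}_i(\Lambda,f)$, and Lemma~\ref{lem:aux2} together with the induction hypothesis still gives $\bigcap_{i\in G}X^{G,\varphi}_{i}=\|\varphi\|^{\mathfrak{M}^c}$, so $\|\varphi\|^{\mathfrak{M}^c}\in\mathcal{N}^{c}_G(\Lambda,f)$ by Definition~\ref{def:model}.

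The work is in the left-to-right direction, which is also where I expect the main obstacle. Suppose $\mathfrak{M}^c,(\Lambda,f)\models\square_G\varphi$, witnessed by $\mathcal{X}=\{X_i\mid i\in G\}$ with each $X_i\in\mathcal{N}^{c}_i(\Lambda,f)$ and $\bigcap\mathcal{X}=\|\varphi\|^{\mathfrak{M}^c}$. Unlike in the base logic, $X_i$ need not be a canonical set; by the new clause~3 it merely contains one, say $X^{H_i,\psi_i}_{i}\subseteq X_i$ with $\square_{H_i}\psi_i\in\Lambda$ and $i\in H_i$. Passing to these witnesses can only shrink the intersection, so $\bigcap_{i\in G}X^{H_i,\psi_i}_{i}\subseteq\|\varphi\|^{\mathfrak{M}^c}$. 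Assuming $\not\vdash\varphi$, I would replay Case~2 of Lemma~\ref{lem:truth} on the family $\{X^{H_i,\psi_i}_{i}\mid i\in G\}$, defining $\mathcal{B}$ as the complete, non-tautological witness-groups and invoking Lemmas~\ref{lem:aux} and~\ref{lem:aux2} verbatim. The one change is that, since we now have an inclusion rather than an equality, the set identity~\eqref{eq:Lambdakvarphi} weakens to one direction and~\eqref{RefFML} becomes the single entailment $\vdash\bigwedge_{(H,\psi)\in\mathcal{B}}\psi\rightarrow\varphi$. Setting $K=\bigcup_{(H,\psi)\in\mathcal{B}}H\subseteq G$ and applying \INTe\ finitely often gives $\square_K\bigwedge_{(H,\psi)\in\mathcal{B}}\psi\in\Lambda$; since $\bigwedge\psi\rightarrow\varphi$ makes $\bigwedge\psi\vee\varphi$ equivalent to $\varphi$, a single use of $(\mathrm{RM}_K)$ with \axref{RE} yields $\square_K\varphi\in\Lambda$. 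This is precisely where monotonicity takes over the role played in the base logic by the equality in~\eqref{RefFML}.

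To raise $K$ up to $G$ I would repair each witness with monotonicity: from $\square_{H_i}\psi_i\in\Lambda$ and the triviality $\vdash\varphi\rightarrow(\psi_i\vee\varphi)$, the schema $(\mathrm{RM}_{H_i})$ with \axref{RE} gives $\square_{H_i}(\varphi\vee\psi_i)\in\Lambda$. Each witness now has exactly the shape demanded by \INTs, so one application of \INTs\ per $i\in G$, starting from $\square_K\varphi$, produces $\square_{K\cup\bigcup_{i\in G}H_i}\varphi\in\Lambda$; as $K\subseteq G\subseteq K\cup\bigcup_{i\in G}H_i$, a final use of \INTv\ delivers $\square_G\varphi\in\Lambda$, just as in the base proof. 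The tautological case $\vdash\varphi$ is simpler: for each $i\in G$ I pick any witness $\square_{H_i}\psi_i\in\Lambda$ with $i\in H_i$ (one exists because $X_i\in\mathcal{N}^{c}_i(\Lambda,f)$); then $(\mathrm{RM}_{H_i})$ gives $\square_{H_i}\top\in\Lambda$, \INTz\ gives $\square_i\top\in\Lambda$, and repeated \INTe\ over the pairwise-disjoint singletons gives $\square_G\top$, i.e.\ $\square_G\varphi$. The delicate point to check is that substituting arbitrary supersets for the exact canonical witnesses leaves the copy-index argument of Lemma~\ref{lem:aux} intact, and that the two extra appeals to monotonicity---to weaken~\eqref{RefFML} to an implication and to bring each witness into $\varphi\vee\psi_i$ form---suffice to recover everything the base proof extracted from equalities.
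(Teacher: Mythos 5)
Your proposal is correct and takes essentially the same route as the paper: the same canonical model with neighbourhoods closed under supersets (the paper's $\mathfrak{M}^c_\uparrow$, Definition \ref{def:canmodsuper}), the same passage from the monotone witnesses $X_i$ to canonical sets $X^{H_i,\psi_i}_{i}$ inside them, the same weakening of the identity (\ref{eq:Lambdakvarphi}) to an inclusion giving $\vdash\bigwedge_{(H,\psi)\in\mathcal{B}}\psi\rightarrow\varphi$, and the same (RM$_K$)/\INTs/\INTv\ finish. If anything, you are more careful than the paper at the point where it says to ``follow the exact same reasoning as that in the proof for the base logic'': since only $\bigcap\mathcal{X}\subseteq\|\varphi\|^{\mathfrak{M}^c}$ is available there, the base proof's semantic derivation of $\square_{H_i}(\varphi\vee\psi_i)$ from $X^{H_i,\psi_i}_{i}\supseteq\|\varphi\|^{\mathfrak{M}^c}$ no longer applies, and your direct appeal to (RM$_{H_i}$) is exactly the needed repair --- as is your order of applying (RM) before \INTz\ in the tautology case.
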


\noindent {Here we  slightly deviate from our standard canonical model construction (Definition \ref{def:canmodbase}). To ensure that the canonical model falls in the class of monotone models, we need to close all neighbourhoods under supersets (cf.\ Definition \ref{def:canmodsuper} in the Appendix).}

Note that in the presence of \axref{RM$_G$}, \axref{RE} becomes a derived rule. Also, it can easily be observed that if we add any (consistent) combination of the axioms (T), \axref{P}, \axref{P$_G$}, \axref{NEC} to the base logic + \axref{RM$_G$}, then we can prove that the associated canonical model $\mathfrak{M}^c_\uparrow$ will be monotone and satisfy the associated frame condition.


\subsection{Closure under Finite and Infinite Intersections}\label{subsec:intersect}

In regular neighbourhood modal logic with one modality $\square$, closure of the neighbourhood function under finite intersections yields the axiom of aggregation: $(\square \varphi\wedge\square\psi) \rightarrow \square(\varphi\wedge\psi)$. In fact, the logic obtained by adding \axref{RE} and (C) to classical logic is complete for both, the class of frames where the neighbourhood function is closed under finite intersections, and the class of frames where the neighbourhood function is closed under infinite intersections. We now generalize this fact to neighbourhood models with pointwise intersection:

\begin{theorem}\label{intersectthm}
The logic of the class of all models where each $\mathcal{N}_i(w)$ is closed under \old{infinite}{arbitrary} intersections is axiomatized by replacing, in the base logic, the axiom \INTe\, with its unrestricted counterpart:
\begin{align*}
(\square_{G}\varphi \wedge\square_H\psi) \rightarrow \square_{G\cup H}(\varphi \wedge\psi) \axlabel{C$_G$}
\end{align*}
\end{theorem}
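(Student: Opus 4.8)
The goal is to prove soundness and strong completeness of the base logic with \INTe\ replaced by its unrestricted variant \axref{C$_G$}, over the class of models in which each $\mathcal{N}_i(w)$ is closed under arbitrary intersections. The plan is to reuse as much of the Theorem \ref{basethm} machinery as possible. Soundness is routine: I would only need to re-check \axref{C$_G$}, since the other axioms are inherited from the base logic. For \axref{C$_G$}, given $\|\varphi\|\in\mathcal{N}_G(w)$ and $\|\psi\|\in\mathcal{N}_H(w)$, I pick witnessing families $\langle X_i\rangle_{i\in G}$ and $\langle Y_j\rangle_{j\in H}$ with $\bigcap_i X_i=\|\varphi\|$ and $\bigcap_j Y_j=\|\psi\|$. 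For each index $k\in G\cup H$ I must supply a single neighbourhood whose pointwise intersection over $G\cup H$ gives $\|\varphi\wedge\psi\|=\|\varphi\|\cap\|\psi\|$; the new ingredient over the base case is precisely that when $k\in G\cap H$ I need $X_k\cap Y_k\in\mathcal{N}_k(w)$, which is exactly what closure under (finite, hence arbitrary) intersections provides. Assigning $X_k$ for $k\in G\setminus H$, $Y_k$ for $k\in H\setminus G$, and $X_k\cap Y_k$ for $k\in G\cap H$ yields a family whose intersection is $\|\varphi\|\cap\|\psi\|$, establishing the axiom.

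\textbf{Completeness.} For the harder direction I would adapt the canonical model of Definition \ref{def:canmodbase}. The obstruction is that the base-logic construction deliberately manufactures \emph{copies} of each MCS (indexed by $f\in\mathbb{F}$) precisely so that no spurious pointwise intersections arise; but now we \emph{want} the $\mathcal{N}^c_i$ to be closed under arbitrary intersection, so the delicate ``not too many sets'' part of the argument (clause (b) in the discussion after Theorem \ref{basethm}) changes character. I would redefine $\mathcal{N}^c_i(\Lambda,f)$ to be the closure under arbitrary intersections of the family $\{X^{G,\varphi}_i\mid \square_G\varphi\in\Lambda,\ i\in G\}$, so that the model lies in the intended class by construction. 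The right-to-left direction of the truth lemma goes through essentially unchanged, using Lemma \ref{lem:aux2}. The left-to-right direction is where the work lies: a witnessing family for $\square_G\varphi$ now consists of sets $X_i$ each of which is an \emph{intersection} of basic sets $X^{H,\psi}_j$, rather than a single basic set. I would therefore expand each $X_i$ into its generating basic sets, collect the full (finite, since the index set is finite and each generator is attached to some $\square_H\psi\in\Lambda$) family of basic sets involved, and rerun the two-case analysis of Lemma \ref{lem:truth}. The key point is that collapsing each $X_i$ to its generators only enlarges the collection of available basic neighbourhoods, and since $\square_H\psi\in\Lambda$ for every generator $X^{H,\psi}_j$, the derivations via \INTz, \axref{C$_G$}, \INTs, and \INTv\ in the two cases apply verbatim once we observe that \axref{C$_G$} subsumes \INTe\ (it is the same axiom without the disjointness side condition), and that the extra freedom to intersect within a single index is exactly what lets the $\mathcal{B}$-grouping argument succeed without the disjointness requirement \axref{eq:notcap}.

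\textbf{Main obstacle.} The principal difficulty is the left-to-right direction of the truth lemma under the enlarged neighbourhood functions: one must argue that closing under intersection does not introduce a pointwise intersection equal to $\|\varphi\|^{\mathfrak{M}^c}$ unless $\square_G\varphi\in\Lambda$ is already derivable. Concretely, given a witnessing family whose members are intersections of basic sets, I must recover a set $K\subseteq G$ with $\square_K\varphi\in\Lambda$ and then inflate $K$ back up to $G$. Here the disjointness bookkeeping of \axref{eq:sqsubset}--\axref{eq:notcap} must be replaced by an argument that tolerates overlapping groups, which is legitimate precisely because \axref{C$_G$} has no disjointness restriction; I expect this to be the one place where the proof genuinely diverges from Theorem \ref{basethm} rather than being a transcription of it. Finally, I would remark that closure under finite intersections and closure under arbitrary intersections yield the same logic, since the axiom \axref{C$_G$} only ever forces finite intersections to exist, and the canonical model can be taken closed under arbitrary intersections with no additional axioms—mirroring the classical one-modality fact cited before the theorem.
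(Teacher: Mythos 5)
Your overall route coincides with the paper's: the canonical model is obtained from Definition \ref{def:canmodbase} by closing each $\mathcal{N}^c_i(\Lambda,f)$ under arbitrary intersections (this is exactly Definition \ref{def:canmodint} in the appendix), soundness is checked as you describe, the right-to-left direction of the truth lemma is unchanged, and the left-to-right direction proceeds by expanding each witness into its generating basic sets $X^{H,\psi}_j$. However, there is a genuine gap at the point you gloss over with a parenthesis: you claim the family of generators is \emph{finite}, ``since the index set is finite and each generator is attached to some $\square_H\psi\in\Lambda$''. This is false. The index set $G$ is finite, but each single witness $Z_i\in\mathcal{N}^{c\cap}_i(\Lambda,f)$ may be the intersection of an \emph{infinite} family $\mathcal{X}_i\subseteq\mathcal{N}^c_i(\Lambda,f)$ of basic sets: $\Lambda$ contains infinitely many formulas of the form $\square_H\psi$, the closure is under arbitrary (not just finite) intersections, and there is no reason an infinite intersection should equal any of its finite sub-intersections. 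Consequently the analogue of the set $\mathcal{B}$ from the proof of Lemma \ref{lem:truth} need not be finite, the expression $\bigwedge_{(H,\psi)\in\mathcal{B}}\psi$ need not be a formula of $\mathfrak{L}$, and the finitely many applications of \axref{C$_G$}, \INTs\ and \INTv\ by which you intend to derive $\square_K\varphi\in\Lambda$ and inflate it to $\square_G\varphi\in\Lambda$ cannot even be set up.

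The missing idea is the compactness of the syntactic consequence relation, which holds by definition (see the end of Section \ref{sec:bl}) and is precisely what the paper invokes here. From the identity $\bigcap_{(H,\psi)\in\mathcal{B}}\{(\Lambda',f')\in W^c\mid\psi\in\Lambda'\}=\{(\Lambda',f')\in W^c\mid\varphi\in\Lambda'\}$ one infers that every MCS containing $\{\psi\mid(H,\psi)\in\mathcal{B}\}$ contains $\varphi$, i.e.\ $\{\psi\mid(H,\psi)\in\mathcal{B}\}\vdash\varphi$; by compactness there is a \emph{finite} $\mathcal{C}\subseteq\mathcal{B}$ with $\vdash\bigwedge_{(H,\psi)\in\mathcal{C}}\psi\rightarrow\varphi$, and since $\vdash\varphi\rightarrow\psi$ holds for each $(H,\psi)\in\mathcal{B}$ individually, one obtains $\vdash\bigwedge_{(H,\psi)\in\mathcal{C}}\psi\leftrightarrow\varphi$. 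Putting $K=\bigcup_{(H,\psi)\in\mathcal{C}}H$, which is still a subset of $G$, the derivation of $\square_K\varphi\in\Lambda$ via \axref{C$_G$} and the subsequent inflation to $\square_G\varphi\in\Lambda$ run exactly as in the base case. With this repair your argument becomes the paper's proof; the remaining parts of your proposal (the soundness check assigning $X_k$, $Y_k$, or $X_k\cap Y_k$ to each index; the observation that \axref{C$_G$} subsumes \INTe\ so that the disjointness bookkeeping of (\ref{eq:sqsubset})--(\ref{eq:notcap}) can be relaxed; and the closing remark that finite and arbitrary intersection closure yield the same logic) are correct and match the paper.
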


\noindent{For the proof, again, we have to deviate slightly from our canonical model construction for the base logic, by closing neighbourhoods under arbitrary intersection.} Note that \axref{C$_G$} is also sound for the class of models where the neighbourhood sets are closed under finite intersection. So we immediately obtain:

\begin{corollary}
The logic of the class of all models where each $\mathcal{N}_i(w)$ is closed under finite intersections is axiomatized by adding to the base logic all instances of \axref{C$_G$}.
\end{corollary}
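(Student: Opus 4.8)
The plan is to adapt the canonical-model machinery of Theorem~\ref{basethm} by closing each $\mathcal{N}^c_i$ under arbitrary intersections, and then to verify that soundness and the Truth Lemma still go through. For soundness, I would check that \axref{C$_G$} is valid on any model where each $\mathcal{N}_i(w)$ is closed under arbitrary (hence in particular under finite, and under the empty) intersections: if $\|\varphi\|\in\mathcal{N}_G(w)$ and $\|\psi\|\in\mathcal{N}_H(w)$, then $\|\varphi\|$ is a pointwise intersection $\bigcap_{i\in G}X_i$ and $\|\psi\|$ is $\bigcap_{j\in H}Y_j$; for indices in $G\cap H$ I use closure under (finite) intersection to replace the pair $X_i,Y_i$ by the single witness $X_i\cap Y_i\in\mathcal{N}_i(w)$, and the resulting pointwise intersection over $G\cup H$ equals $\|\varphi\|\cap\|\psi\|=\|\varphi\wedge\psi\|$, witnessing $\square_{G\cup H}(\varphi\wedge\psi)$. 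The remaining base axioms \INTz,\,\INTv,\,\INTs\ remain sound since they held on the class of \emph{all} models and closure under intersections only shrinks that class.

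For completeness I would introduce the modified canonical model $\mathfrak{M}^c_\cap$ (the intersective analogue promised for the Appendix), defined exactly as in Definition~\ref{def:canmodbase} except that each $\mathcal{N}^c_i(\Lambda,f)$ is replaced by its closure under arbitrary intersections, i.e.\ $\overline{\mathcal{N}^c_i}(\Lambda,f)=\{\bigcap\mathcal{Z}\mid \mathcal{Z}\subseteq\mathcal{N}^c_i(\Lambda,f)\}$, so that by construction every $\mathcal{N}^c_i$ is intersection-closed and $\mathfrak{M}^c_\cap$ lies in the target class. The base valuation and world set are unchanged, so the base case and the Boolean induction steps of the Truth Lemma are untouched; only the modal clause (TL$\square$) must be re-examined. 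The right-to-left direction is if anything easier than before: if $\square_G\varphi\in\Lambda$, the same witnesses $X^{G,\varphi}_i\in\mathcal{N}^c_i(\Lambda,f)\subseteq\overline{\mathcal{N}^c_i}(\Lambda,f)$ show, via Lemma~\ref{lem:aux2} and the IH, that $\|\varphi\|^{\mathfrak{M}^c_\cap}\in\mathcal{N}_G(\Lambda,f)$.

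The main obstacle is the left-to-right direction, because a witness $X_i\in\overline{\mathcal{N}^c_i}(\Lambda,f)$ need no longer be a single set $X^{H,\psi}_{j}$; it is now an intersection $\bigcap_k X^{H_k,\psi_k}_{j_k}$ of basic sets, so the clean combinatorial bookkeeping of $\mathcal{A}$ and $\mathcal{B}$ in the original proof does not apply verbatim. I would handle this by first collapsing each such intersected witness: since $\mathcal{N}^c_i(\Lambda,f)$ consists of sets $X^{H,\psi}_{j}$ with $\square_H\psi\in\Lambda$ and $j=i$ is forced once we restrict attention to index $i$, and since $\bigcap_k X^{H_k,\psi_k}_{i}=\{(\Lambda',f')\mid (\Lambda',f')\in X^{H_k,\psi_k}_i \text{ for all }k\}$, I would argue (using Lemma~\ref{lem:aux2}, the IH, and \axref{C$_G$} applied with $G=H=\{i\}$ within a single MCS, which is now available) that each intersected witness for index $i$ can be replaced by a single basic set $X^{H_i,\psi_i}_{i}$ with $\square_{H_i}\psi_i\in\Lambda$ and $\bigcap_k\|\psi_k\|=\|\psi_i\|$ up to provable equivalence. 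Once every witness is reduced to basic form in this way, the situation is exactly that of the base-logic Truth Lemma, and I can invoke the remaining argument — the construction of $\mathcal{B}$, equations (\ref{eq:sqsubset})–(\ref{RefFML}), and the successive applications of \INTe,\,\INTs,\,\INTv\ — essentially unchanged to conclude $\square_G\varphi\in\Lambda$. For the Corollary I would simply note that \axref{C$_G$} is sound on the finite-intersection class as well (the soundness argument above uses only finite intersections), and that the same canonical model, whose neighbourhoods are closed under \emph{arbitrary} intersections, is a fortiori closed under finite ones and thus serves as a witness for completeness over the finite-intersection class too.
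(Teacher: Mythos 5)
Your soundness check and your closing step are fine, and that closing step is exactly the paper's proof of this Corollary: \axref{C$_G$} is sound on the finite-intersection class (binary intersections suffice), and the canonical model $\mathfrak{M}^c_\cap$, being closed under arbitrary intersections, lies a fortiori in that class, so completeness transfers from Theorem \ref{intersectthm}. The genuine gap is in how you establish the theorem this rests on, namely in the left-to-right modal case of the Truth Lemma for $\mathfrak{M}^c_\cap$. Your ``collapse'' of an intersected witness $Z_i=\bigcap_k X^{H_k,\psi_k}_{i}$ into a single basic set does not work. The sets $X^{H,\psi}_{i}$ of Definition \ref{def:canmodbase} are not truth sets: they also contain every escape world $(\Lambda',f')$ with $f'(H,\psi)\neq i$. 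The only basic surrogate that \axref{C$_G$} plus \axref{RE} can supply is $X^{H'_i,\psi'_i}_{i}$ with $\psi'_i=\bigwedge_k\psi_k$ and $H'_i=\bigcup_k H_k$, and this is a genuinely different set of worlds from $Z_i$, because its escape clause refers to the single pair $(H'_i,\psi'_i)$ rather than to the pairs $(H_k,\psi_k)$. Hence the hypothesis $\bigcap_{i\in G}(\mbox{witness}_i)=\|\varphi\|^{\mathfrak{M}^c_\cap}$ is destroyed by the replacement: in general only $\supseteq$ survives, which yields only $\vdash\varphi\rightarrow\bigwedge\psi$ instead of the biconditional (\ref{RefFML}) that the base-logic argument needs. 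Concretely, let $G=\{1,2\}$, let $\square_{\{1,2\}}(p\wedge q),\square_{\{1,2,3\}}q\in\Lambda$, and take $Z_1=X^{\{1,2\},p\wedge q}_{1}\cap X^{\{1,2,3\},q}_{1}$ and $Z_2=X^{\{1,2\},p\wedge q}_{2}$. By Lemma \ref{lem:aux2}, $Z_1\cap Z_2=\|p\wedge q\|^{\mathfrak{M}^c_\cap}$, so this is a legitimate instance of the Truth Lemma; but your collapse turns $Z_1$ into $X^{\{1,2,3\},p\wedge q}_{1}$, after which no pair is fully covered, so by Lemma \ref{lem:aux} the intersection of the two collapsed sets contains a full slice $\{(\Lambda',f_0)\in W^c\}$ and thus strictly exceeds $\|p\wedge q\|^{\mathfrak{M}^c_\cap}$; it is not even a truth set, and the base-logic argument cannot be invoked on it ``essentially unchanged''.

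There is a second, independent problem: since $\mathcal{N}^{c\cap}_i(\Lambda,f)$ is closed under \emph{arbitrary} intersections, $Z_i$ may be an intersection of infinitely many basic sets, and then no formula $\psi'_i$ with $\|\psi'_i\|^{\mathfrak{M}^c_\cap}=\bigcap_k\|\psi_k\|^{\mathfrak{M}^c_\cap}$ exists at all (take the $\psi_k$ to be infinitely many distinct atoms); finitely many applications of \axref{C$_G$} cannot manufacture one, and compactness cannot help at this stage because there is as yet no single target formula. (A smaller slip: the relevant instances of \axref{C$_G$} are not $G=H=\{i\}$, since members of $\mathcal{N}^c_i(\Lambda,f)$ carry groups $H_k$ that merely \emph{contain} $i$, and the logic provides no way to shrink $\square_{H_k}\psi_k$ to $\square_{\{i\}}\psi_k$.) The paper avoids both problems by never collapsing: it pools all the basic sets into one collection $\mathcal{X}=\bigcup_{i\in G}\mathcal{X}_i$, so that $\bigcap\mathcal{X}=\bigcap\mathcal{Z}=\|\varphi\|^{\mathfrak{M}^c_\cap}$ holds exactly, and reruns the $\mathcal{A}$/$\mathcal{B}$ analysis on this pool. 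The two new phenomena are precisely where the new resources enter: distinct pairs in $\mathcal{B}$ may now share indices, which is why the unrestricted \axref{C$_G$} is needed where the base proof used \INTe; and $\mathcal{B}$ may be infinite, which is handled by compactness of $\vdash$ only \emph{after} the MCS-level equivalence is established, extracting a finite $\mathcal{C}\subseteq\mathcal{B}$ with $\vdash\bigwedge_{(H,\psi)\in\mathcal{C}}\psi\leftrightarrow\varphi$ and $K=\bigcup_{(H,\psi)\in\mathcal{C}}H\subseteq G$. With the theorem proved this way, your final observation then yields the Corollary exactly as the paper does.
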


\section{Summary and Outlook}\label{sec:concl}

In this paper, we axiomatized the base logic of neighbourhood models with pointwise intersection and various extensions obtained by imposing standard frame conditions on the neighbourhoods for the individual indexes. For the canonical model construction in our completeness proof we made use of a copying technique that is -- as far as we know -- new. In forthcoming work, we generalize these results, including the operation of pointwise intersection of a neighbourhood set with \emph{itself} and establishing the finite model property for the resulting classes of logics.

Some obvious open questions concern the other (standard) frame conditions that correspond to well-known axioms such as the (4)-axiom, the (5)-axiom, and other ``usual suspects'' in modal logic. Also, one may consider the possibility of adding a universal modality to the logics, which in turn allows one to express conditions like monotonicity by means of axioms schemata. Finally, one may consider multi-modal logics where only some of the individual operators satisfy certain principles (e.g.\ one non-normal operator for ability, and another normal operator for belief or knowledge), and check to what extent our current techniques can be applied to those.

Our definition of the canonical model, we conjecture, can be easily generalized to axiomatize other operations on neighbourhood functions. One may e.g.\ define \emph{pointwise union} in a wholly analogous fashion, replacing every occurrence of $\cap$ in Definition \ref{def:model} with $\cup$. Drawing inspiration from Dynamic Logic, one may also define various operations of sequential composition of neighbourhoods. In sum, we believe that the perspective we have tried to sketch here allows for a plethora of fascinating new logical investigations and philosophical applications.

\bibliographystyle{aiml18}


\newpage
\appendix
\setcounter{section}{1}
For convenience, we restate theorems and lemmas before proving them.

\begin{rlemma}{Indeplem}
Axioms \INTe-\INTs\ are logically independent from each other.
\end{rlemma}
\begin{proof} {We sketch the argument that \INTe-\INTs\ are mutually independent. In view of the soundness of these axioms w.r.t.\ models with pointwise intersection, we can only falsify those axioms in models of a more general type, i.e.\ where each of the neighbourhood functions $\mathcal{N}_G$ are primitive. We stick to the semantic clauses from Definition \ref{def:semclauses}. All our examples work with a set of agents $I=\{1,2,3\}$ and a set of worlds  $W=\{w_p,w_q,w_r\}$, where the atoms $p,q,r$ are true at $w_p,w_q$ and $w_r$ respectively. The models we construct only differ in their neighbourhood functions. In the following, whenever a neighbourhood $\N_G(w)$ for $G\subseteq \{1,2,3\}$ remains  unspecified, we assume that $\N_G(w)=\{\emptyset\}$. Moreover, all neighbourhood functions are assumed constant, i.e. $\N_G(w)=\N_G(w')$ for all $w,w'\in W$. We will   write $\N$ instead of $\N(w)$.}

To see that \INTe\ is independent of \INTz-\INTs, we define model $\M_1$ as follows: Let $\N_{\{1\}}=\{\{w_p,w_r\},\emptyset\}$ and  $\N_{\{2\}}=\{\{w_q,w_r\},\emptyset\}$. It is easy to check that  \INTz-\INTs\ are valid on this model. {First, the antecedent of \INTz\, is always false. Second, for \INTv\, and \INTs, the antecedent can only be true if $\|\varphi\|^{\M_1} = \emptyset$; under this condition, the consequent is easily verified.} However, we have that $\M_1,w_p\models \square_{\{1\}}(p\vee r)\wedge \square_{\{2\}}(q\vee r)$ but  $\M_1,w_p\not\models \square_{\{1,2\}}((p\vee r)\wedge(q\vee r))$, contradicting \INTe.

Next, to show that \INTz\ is independent of \INTe,\INTv\ and \INTs, define the model $\M_2$ by taking neighbourhoods to be $\N_{\{i\}}=\wp(W)\setminus\{1,2,3\}$ for all singletons $\{i\}$ and $\N_{G}=\wp(W)$ for all $G\subseteq I$ of cardinality at least $2$. {Note that for all $\varphi$ and all groups $G$ with cardinality at least $2$, $\square_G \varphi$ is true at all worlds in $\M_2$.} From this one can easily infer that \INTe,\INTv\ and \INTs\ are valid in $\M_2$. However, we have $\M_2,w_p\models\square_{\{1,2\}}\top\wedge\neg\square_{\{1\}}\top$ contradicting \INTz.

To see that \INTv\ is independent from \INTe,\INTz\ and \INTs\ consider  model $\M_3$ with neighbourhood $N_{\{1\}} =N_{\{1,2,3\}} =\{\{w_p\},\emptyset\} $. Again it's easy to see that this model satisfies \INTe,\INTz\ and \INTs, but not \INTv\ as $\M_3, w_p\models\square_{\{1\}}p\wedge \square_{\{1,2,3\}}p$ but $\M_3, w_p\not\models\square_{\{1,2\}}p$.

To see that \INTs\ is independent of \INTe-\INTv\ consider model $\M_4$ with  neighbourhoods   $N_{\{1,3\}}=\{\{w_p\},\emptyset\} $ and $N_{\{1,2\}}=N_{\{1,2,3\}}=\{\{w_p,w_q\},\emptyset\}$.  It is  easy to see that this model satisfies \INTe-\INTv, but not \INTs, as $\M_4,w_p\models\square_{\{1,3\}}p\wedge\square_{\{1,2\}}p\vee q$, but $\M_4,w_p\not\models\square_{\{1,2,3\}}p$.
\end{proof}

\begin{rlemma}{inheritancelem}
For any extension $\vdash$ of the base logic: if for all $i\in I$, $\vdash \square_i \top$ (resp.  $\vdash \neg\square_i \top$ or $\vdash \square_i \bot$ ), then for all $G\subseteq_f I$, $\vdash \square_G \top$ (resp.  $\vdash \neg\square_G \top$ or $\vdash \square_G \bot$ ).
\end{rlemma}

\begin{proof}
Assume $\vdash \square_i \top$ for all $i\in I$ and let $G\subseteq_f I$. Then an iterated application of \INTe\ yields $\vdash \square_G \top$. Likewise, if $\vdash \square_i \bot$,  an iterated application of \INTe \ yields $\vdash \square_G \bot$. Finally for  $\vdash \neg\square_i \top$  note that by \INTz, we have that $\vdash\square_G\top\rightarrow\square_j\top$ for all $G\subseteq_f I$ and $j\in G$. Hence. $\vdash\bigwedge_{j\in G}\neg\square_j \top\rightarrow\neg\square_G\top$.
\end{proof}

\begin{rtheorem}{th:SCcheap} The logic of any selection of frame conditions from Table \ref{table:cheapcond} is axiomatized by adding the corresponding axioms from that table to the base logic.\end{rtheorem}
\begin{proof}This is  a straightforward adaption of the original proof. The only additional thing to show is that the $X_i^{G,\phi}$  do not violate any of the four frame conditions. For \axref{NEC} and \axref{COP} this is immediate. For \axref{P} it follows from the fact that $\emptyset\subset X_i^{G,\phi}$ whenever $\not\vdash\bot\leftrightarrow\phi$ or $|G|>2$, together with $\Box_i\bot\not\in\Lambda$ for any $\Lambda$. For \axref{CONEC} it follows from the fact that $X_i^{G,\phi}\subset W^c$ whenever $\not\vdash\phi\leftrightarrow\top$ together with Lemma \ref{inheritancelem}.\end{proof}

\begin{rtheorem}{th:SCcheapfor all } The logic of frame condition $\emptyset\not\in \mathcal{N}_G(w)$ in conjunction with any selection of frame conditions from Table \ref{table:cheapcond} is axiomatized by adding to the base logic the corresponding axioms from that table and all instances of \axref{P$_G$}.\end{rtheorem}

\begin{proof} Soundness is a matter of routine: one simply checks that the axiom is valid whenever the corresponding frame condition holds. 

We briefly sketch the completeness proof for \axref{P}; for each of the other three axioms the reasoning is completely analogous. First, we construct the canonical model according to Definition \ref{def:canmodbase}, with the only difference that our maximal consistent sets are constructed using the stronger logic that also contains the \axref{P}-axiom. We then prove the auxiliary lemmata and the truth lemma, just as before (see Lemmas \ref{lem:aux}, \ref{lem:aux2}, and \ref{lem:truth}). By the Truth Lemma, we obtain that $\emptyset\in\mathcal{N}_G(\Lambda,f)$ iff $\square_G\bot\in \Lambda$. However, for all MCS $\Lambda$, we also know that $\neg\square_G\bot\in\Lambda$. Hence, since every such $\Lambda$ is consistent, we can infer that $\emptyset\not\in\mathcal{N}_G(\Lambda,f)$.
\end{proof}

\begin{rtheorem}{reflthm}
The logic of the class of models $\mathfrak{M} = \langle W,\langle\mathcal{N}_i\rangle_{i\in I},V\rangle$ where each $\mathcal{N}_i$ is reflexive is axiomatized by adding to the base logic all instances of  the following axiom schema:
\begin{align*}
\square_G\varphi \rightarrow \varphi\axlabel[T$_G$]{TG-App}
\end{align*}
\end{rtheorem}
\begin{proof} Soundness is again a matter of routine. For completeness, we can again use the canonical model construction from Definition \ref{def:canmodbase}. The auxiliary lemmata and the truth lemma are proven as before; it suffices to show that the frame condition for \axref{TG-App} is satisfied. So suppose that $X^{G,\varphi}_{i}$ is a member of $\mathcal{N}_i(\Lambda,f)$. In view of the construction, (a) $X^{G,\varphi}_{i}$ is a superset of the set $\{(\Lambda',f')\in W^c \mid \varphi\in\Lambda\}$ and (b) $\square_G\varphi \in\Lambda$. By (b) and the axiom \axref{TG-App}, also $\varphi\in\Lambda$, and hence by (a), for all $f''\in \mathbb{F}$, $(\Lambda,f'') \in X^{G,\varphi}_{i}$. Consequently, $(\Lambda,f) \in X^{G,\varphi}_{i}$.
\end{proof}

\begin{rtheorem}{binconsthm}
The logic of the class of models $\mathfrak{M} = \langle W,\langle\mathcal{N}_i\rangle_{i\in I},V\rangle$ that satisfy binary consistency is axiomatized by adding to the base logic all instances of  the following axiom schema, for all $i\in I$:
\begin{align*}
\square_i\varphi \rightarrow \neg\square_i\neg\varphi \axlabel[D$_i$]{DI-Append}
\end{align*}
\end{rtheorem}
\begin{proof} We can again use the same canonical model construction. It suffices to show that in the presence of \axref{DI-Append}, this model will satisfy binary consistency. So suppose that $i\in I$, $(\Lambda,f)\in W^c$ and $X\subseteq W^c$ are such that $X, Y\in \mathcal{N}_i(\Lambda,f)$ where $Y = W\setminus X$. Case 1: $X$ is definable, i.e.\ there is some $\varphi$ such that $X = \|\varphi\|^{\mathfrak{M}^c}$. In that case, by the truth lemma, $\square_i \varphi \wedge\square_i\neg\varphi\in \Lambda$, contradicting the supposition that $\Lambda$ is consistent and closed under \axref{DI-Append}.

Case 2: $X$ and $Y$ are not definable.  Note that by the construction of $\mathfrak{M}^c$, $X = X^{G,\varphi}_{i}$ and $Y = Y^{H,\psi}_{i}$, with $i\in G\cap H$. Suppose first that $G=\{i\}$ or $H=\{i\}$. Then by Definition \ref{def:canmodbase} and the truth lemma, $X = \|\varphi\|^{\mathfrak{M}^c}$ or $Y = \|\psi\|^{\mathfrak{M}^c}$, contradicting the assumption that neither $X$ nor $Y$ are definable. So there are $j,k$ such that $j\in G\setminus \{i\}$ and $k\in H\setminus \{i\}$. Let now $f'\in\mathbb{F}$ be such that $f'(G,\varphi) = j$ and $f'(H,\varphi) = k$, and let $\Lambda$ be an arbitrary MCS. Note that $(\Lambda,f') \in X^{G,\varphi}_{i} \cap Y^{H,\psi}_{i}$ by the construction of $\mathfrak{M}^c$. Hence, $X \cap Y\neq\emptyset$, contradicting the supposition that $Y = W\setminus X$.
\end{proof}

\begin{rtheorem}{closesuper}
The logic of the class of all monotone models is axiomatized by adding to the base logic all instances of  the following axiom schema:
\begin{align*}
\square_G\varphi \rightarrow \square_G(\varphi \vee\psi) \axlabel[RM$_G$]{RMG-App}
\end{align*}
\end{rtheorem}
\begin{proof} Soundness is a matter of routine. For completeness, we need a slightly different construction. Let $\mathfrak{M}^c = \langle W^c,\langle\mathcal{N}^c_i\rangle_{i\in I}, V^c\rangle$ be defined as before -- see Definition \ref{def:canmodbase}. Now, define $\mathfrak{M}^c_{\uparrow}$ as follows:

\begin{definition}\label{def:canmodsuper}
$\mathfrak{M}^c_{\uparrow} = \langle W^c, \langle \mathcal{N}^{c\uparrow}_i\rangle_{i\in I}, V^c\rangle$, where for all $(\Lambda,f)\in W^c$, $\mathcal{N}^{c\uparrow}_i(\Lambda,f)$ is the closure of $\mathcal{N}^c_i(\Lambda,f)$ under supersets: $\mathcal{N}^{c\uparrow}_i(\Lambda,f) = \{Y\subseteq W^c \mid \mbox{ for an } X\in\mathcal{N}^c_i(\Lambda,f), X\subseteq Y\}$.
\end{definition}

Note that lemmas \ref{lem:aux} and \ref{lem:aux2} are preserved, since these only concern the sets $X^{G,\varphi}_{i}$ that are used in the construction of each $\mathcal{N}_i$. The truth lemma however needs to be proved anew. Again, the crucial point is to prove the induction step for $\square_G$:
\begin{align*}
\mathfrak{M}^c_\uparrow,(\Lambda,f)\models \square_G\varphi\text{ iff }\square_G\varphi\in\Lambda\axlabel[TL$\square \uparrow$]{TLSU}
\end{align*}

For the right-to-left direction of (\ref{TLSU}), we can simply repeat the proof of the right-to-left direction of (TL$\square$). For left-to-right, some small changes are required, which we spell out here.

Suppose that $\mathfrak{M}^c_{\uparrow},(\Lambda,f)\models \square_G\varphi$. By the semantic clause for $\square_G$, there is a $\mathcal{Z} = \{Z_{i}\mid i\in G\}$ such that each $Z_{i}\in \mathcal{N}^{c\uparrow}_i(\Lambda,f)$ and $\bigcap_{i\in G} Z_{i} = \|\varphi\|^{\mathfrak{M}^c_\uparrow}$. By the construction, for each $Z_{i}\in\mathcal{Z}$ there is an $X_{i} \in \mathcal{N}_i(\Lambda,f)$ such that $X_{i}\subseteq Z_{i}$. Hence,

\begin{align} \bigcap_{i\in G} X_{i}\subseteq \|\varphi\|^{\mathfrak{M}^c_\uparrow} \end{align}

We define $\mathcal{A}$ as before. 
Note that, in view of the preceding, each $\mathcal{N}^{c\uparrow}_i(\Lambda,f)$ with $i\in G$ is non-empty. This implies that for all $i\in G$, there is some $\psi_i$ and some $G_i$ that contains $i$, such that $\square_{G_i}\psi_i \in\Lambda$. By \INTz, $\square_i\psi_i\in \Lambda$ and hence by \axref{RMG-App}, also

\begin{align} \square_i\top\in\Lambda \mbox{ for all } i\in G \label{eq:top} \end{align}

Case 1: $\varphi$ is a tautology. By (\ref{eq:top}), using \INTe\, $\square_{G} \top\in\Lambda$. By \axref{RE}, $\square_G\varphi\in \Lambda$.

Case 2: $\varphi$ is not a tautology. Define $\mathcal{B}$ as before. We can now reason just as before, but instead of deriving an identity, we get at the following set inclusion: 

\verberg{
\begin{align}
\bigcap \mathcal{Z}\supseteq \bigcap \mathcal{X} = \bigcap_{(H,\psi)\in\mathcal{A}\setminus \mathcal{B}, X^{H,\psi}_{j}\in \mathcal{X}} X^{H,\psi}_{j} \cap \bigcap_{(H,\psi)\in \mathcal{B}, j\in H} X^{H,\psi}_{j}
\end{align}

By Lemma \ref{lem:aux}, there is an $f'\in\mathbb{F}$ such that

\begin{align}
\{(\Lambda',f')\in W^c\}\subseteq \bigcap_{(H,\psi)\in\mathcal{A}\setminus \mathcal{B}, X^{H,\psi}_{j}\in \mathcal{X}} X^{H,\psi}_{j}
\end{align}
In view of Definition \ref{def:canmodbase}, Lemma \ref{lem:aux2}, and the IH,
}

\begin{align}
\bigcap_{(H,\psi)\in \mathcal{B}} \{(\Lambda',f') \in W^c\mid \psi \in\Lambda'\} \subseteq \{(\Lambda',f') \in W^c\mid \varphi\in\Lambda'\}
\end{align}
Hence, every MCS that contains every member of $\{\psi \mid (H,\psi)\in \mathcal{B}\}$ also contains $\varphi$. Since $\mathcal{B}$ is finite, this gives us:

\begin{align}\vdash\bigwedge_{(H,\psi)\in \mathcal{B}}\psi \rightarrow \varphi.\label{RefFMLRM}\end{align}

By Definition \ref{def:canmodbase}.3, $\square_{H}\psi\in\Lambda$ for all $(H,\psi)\in \mathcal{B}$. Let $K  = \bigcup_{(H,\psi)\in \mathcal{B}} H$. Note that, by (\ref{eq:sqsubset}), $K \subseteq G$. Applying \INTe\, a suitable number of times, we can derive that $\square_{K} \bigwedge_{(H,\psi)\in \mathcal{B}}\psi \in \Lambda$. By \axref{RMG-App} and (\ref{RefFMLRM}),

\begin{align} \square_{K }\varphi\in \Lambda \label{eq:varphiinLambdaRM}\end{align}

From there, we can follow the exact same reasoning as that in the proof for the base logic, starting after equation (\ref{eq:varphiinLambda}).
\end{proof}

\begin{rtheorem}{intersectthm}
The logic of the class of all models where each $\mathcal{N}_i(w)$ is closed under \old{infinite}{arbitrary} intersections is axiomatized by replacing, in the base logic, the axiom \INTe\, with its unrestricted counterpart:
\begin{align*}
(\square_{G}\varphi \wedge\square_H\psi) \rightarrow \square_{G\cup H}(\varphi \wedge\psi) \axlabel[C$_G$]{CG-Appendix}
\end{align*}
\end{rtheorem}
\begin{proof} Soundness is again a matter of routine. For completeness we close all the neighbourhood functions of the canonical model for the base logic under intersection:
\begin{definition}\label{def:canmodint}
$\mathfrak{M}^c_{\cap} = \langle W^c, \langle \mathcal{N}^{c\cap}_i\rangle_{i\in I}, V^c\rangle$, where for all $(\Lambda,f)\in W^c$, $\mathcal{N}^{c\cap}_i(\Lambda,f)$ is the closure of $\mathcal{N}^c_i(\Lambda,f)$ under (possibly infinite) intersections: $\mathcal{N}^{c\cap}_i(\Lambda,f) = \{\bigcap \mathcal{Y} \mid \mathcal{Y}\subseteq \mathcal{N}^c_i(\Lambda,f)\}$.
\end{definition}

Again, right-to-left of the truth lemma for $\square_G$ is easy, since we only added neighbourhoods to the original canonical model. For left-to-right, we need a slightly different reasoning. Suppose that $\mathfrak{M}^{c\cap}, (\Lambda,f)\models \square_G\varphi$. So there is a $\mathcal{Z} = \{Z_{i}\mid i\in G\}$ such that each $Z_{i} \in\mathcal{N}_i^{c\cap}(\Lambda,f)$, and $\bigcap\mathcal{Z} = \|\varphi\|^{\mathfrak{M}^{c\cap}}$. By the definition of $\mathfrak{M}^{c\cap}$, for every $Z_{i}\in\mathcal{Z}$ there is a $\mathcal{X}_{i} \subseteq_f \mathcal{N}_i(\Lambda,f)$ such that $Z_{i} = \bigcap \mathcal{X}_{i}$. Let $\mathcal{X} = \bigcup_{i\in G} \mathcal{X}_{i}$. Note that $\bigcap\mathcal{X} = \bigcap\mathcal{Z}$. Let $\mathcal{A} = \{(H,\psi)\in\mathbb{G}\times\mathfrak{L}\mid X^{H,\psi}_i\in \mathcal{X} \mbox{ for some } i\in H\}$ and let $\mathcal{B} = \{(H,\psi)\in\mathbb{G}\times\mathfrak{L}\mid X^{H,\psi}_i\in \mathcal{X} \mbox{ for all } i\in H\}$. Note that for all $(H,\psi)\in\mathcal{B}$, $H\subseteq G$.

We now reason as before, deriving the following equation:

\begin{align} \bigcap_{(H,\psi)\in\mathcal{B}} \{(\Lambda,f)\in W^c\mid \psi\in\Lambda\} = \{(\Lambda,f)\in W^c\mid \varphi\in\Lambda\}\end{align}

In other words, every maximal consistent set that contains all $\psi$ for $(H,\psi)\in \mathcal{B}$ also contains $\varphi$, and vice versa. Note however that $\mathcal{B}$ needn't be finite. By the compactness of our syntactic consequence relation however, it follows that there is a finite $\mathcal{C}\subseteq \mathcal{B}$ such that:

\begin{align}
\bigwedge_{(H,\psi)\in\mathcal{C}} \psi \leftrightarrow \varphi
\end{align}

Put $K = \bigcup_{(H,\psi)\in\mathcal{C}} H$. In view of the preceding, $K\subseteq G$. From there, we reason as before, deriving that $\square_K\varphi \in \Lambda$, and finally also that $\square_G\varphi\in\Lambda$.
\end{proof}

\end{document}